\documentclass[11pt]{article}

\usepackage[a4paper,margin=1in]{geometry}
\usepackage{amsmath,amssymb,amsthm}
\usepackage{booktabs}
\usepackage{hyperref}
\usepackage{authblk}
\usepackage{float}

\newcommand{\ZZ}{\mathbb Z}
\newcommand\FF{\mathbb F}

\newcommand\RR{\mathbb R}
\newcommand\QQ{\mathbb Q}

\newcommand{\F}{\mathbf F}
\newcommand{\G}{\mathbf G}
\newcommand{\ii}{i}
\DeclareMathOperator{\den}{den}

\theoremstyle{plain}
\newtheorem{theorem}{Theorem}
\newtheorem{lemma}{Lemma}
\newtheorem{proposition}{Proposition}
\newtheorem{corollary}{Corollary}

\theoremstyle{definition}
\newtheorem{definition}{Definition}

\theoremstyle{remark}
\newtheorem{remark}{Remark}

\title{A $p$-adic ($p\equiv 3 \pmod 4$) depth-$5$ supercongruence for Gaussian $p$-th power sums over a square }
\author[1]{Nikita Kalinin}
\author[2]{Faith Shadow Zottor}
\affil[1]{Guangdong Technion-Israel Institute of Technology (GTIIT),
$241$ Daxue Road, Shantou, Guangdong Province $515603$, P.R. China, Technion-Israel Institute of Technology, Haifa, 32000, Haifa District, Israel, nikaanspb@gmail.com}
\affil[2]{Guangdong Technion-Israel Institute of Technology (GTIIT),
$241$ Daxue Road, Shantou, Guangdong Province $515603$, P.R. China, faith.zottor@gtiit.edu.cn}

\date{}

\begin{document}
\maketitle

\begin{abstract}
For an odd prime $p$, define
\[
\G_n(p)=
\sum_{a=1}^{p-1}\sum_{b=1}^{p-1}(a+b\ii)^n
\in\ZZ[\ii].
\]
We study the $p$-adic valuation of these Gaussian power sums for $n\leq p, n=r(p-1)$ and show
that it is governed by the interaction of the fourfold symmetry of the
square, ordinary power-sum congruences, and Bernoulli numbers.



If $p\equiv3\pmod4$ and $p\ge7$, then we prove an unexpectedly deep supercongruence
\[
\G_p(p)\equiv
-\frac{p^5}{12}(p-1)^2(p-2)(1-\ii)B_{p-3}
\pmod{p^6}.
\]
\end{abstract}

\bigskip\textbf{Keywords:} Wolstenholme prime, Power sum, Bernoulli number, Supercongruences.

\textbf{AMS Subject Classification(2020):}  11B99, 11A99, 11A07

\section{Introduction}

Formulas for finite power sums
\[
S_r(N):=\sum_{t=1}^{N} t^r
\]
were already systematically explored by J.~Faulhaber in the early seventeenth century and were later placed into a uniform framework by J.~Bernoulli \cite{bernoulli1713ars,knuth1993johann} via the introduction of the Bernoulli numbers and Bernoulli polynomials, yielding the identity
\[
S_r(N)=\frac{B_{r+1}(N+1)-B_{r+1}}{r+1}.
\]

Ever since Bernoulli, power sums have been a rich source of congruences and $p$-adic phenomena.  The archetypal example is the von Staudt--Clausen description of the denominators of Bernoulli numbers, which explains when $p$ can appear in denominators of Bernoulli numbers and underlies many ``coincidences'' in $p$-adic arithmetic, \cite{staudt1840beweis}. A second foundational theme is that Bernoulli numbers encode congruences in families (Kummer-type congruences) and, more broadly, measure irregularity phenomena for primes \cite{kummer1851allgemeine}.

In this paper, motivated by conjectures in \cite{kalinin2025wolstenholme}, we study a two-parameter \emph{Gaussian} analogue,
\[
\F_n(k,m)=\sum_{a=1}^{k}\sum_{b=1}^{m}(a+b\ii)^n\in\ZZ[\ii],
\qquad n\in\ZZ_{\ge 0},\ k,m\in\ZZ_{\ge 1},
\]
and focus in particular on the square specialization at an odd prime
\[
\G_n(p):=\F_n(p-1,p-1)=\sum_{a=1}^{p-1}\sum_{b=1}^{p-1}(a+b\ii)^n\in\ZZ[\ii],
\]
where \(p\) is an odd prime.


For a finite commutative unital ring $R$ and an integer $k\ge 1$, set
\[
S_k(R):=\sum_{x\in R} x^k\in R.
\]
Determining $S_k(R)$ is a natural extension of the classical power-sum problem over finite fields and residue rings, and it has been studied in generality for finite commutative unital rings; see, for instance, \cite{ayuso2015staudt} and the refinements in \cite{grau2017power}.  In the Gaussian setting, one considers
\[
\G_n(p)=\sum_{a,b=1}^{p-1}(a+bi)^n\in\ZZ[i].
\]
Ayuso--Grau--Oller-Marc\'en \cite{ayuso2015staudt} obtained a von Staudt-type description of $\mathbf{G}_n(k)\bmod k$, for $k$ not necessarily prime   (see also \cite{khare2019carlitz} for Carlitz Staudt-type results in finite rings).

 In particular, they develop a Gaussian analogue of the Carlitz--von Staudt theory: for general moduli $n$ they obtain an explicit congruence for the sum of $k$-th powers in the finite ring \(\ZZ_n[i]\), describing its residue class modulo $n$ by an explicit expression determined by the prime divisors of $n$ satisfying certain congruence conditions (in particular, by primes in specific congruence classes modulo $4$).  As a consequence, they give an explicit criterion for when the \emph{diagonal values} satisfy \(n\mid \G_k(n)\), and they prove that the set of such $n$ has an asymptotic density which they compute numerically to six digits.

We have a different emphasis: we fix a prime $p$ and study the \emph{$p$-adic depth} of $\mathbf{G}_k(p)$ inside $\ZZ[i]$.
Over a ring of characteristic $p$ it is often natural (and frequently forced by symmetry) that $S_k(R)$ vanishes in $R$, i.e.\ the corresponding lift is divisible by $p$.
Our goal is to quantify and explain \emph{higher} divisibility:
we ask for the largest $m$ such that
\[
\mathbf{G}_k(p)\equiv 0 \pmod{p^m}
\qquad\text{in }\ZZ[i],
\]
in other words, we seek \emph{supercongruences} for Gaussian power sums.
In the case $k=p$, we show that the depth can be unexpectedly large and is governed by the splitting behavior of $p$ in $\ZZ[i]$ together with a Bernoulli obstruction (at index $p-3$) in the inert case.

Our main theorem is as follows.

\begin{theorem}\label{thm:Gp-p3}
Let $p\ge7$ be a prime satisfying $p\equiv3\pmod4$. Then, in
$\ZZ_{(p)}[\ii]$,
\begin{equation}\label{eq:main-inert}
\G_p(p)\equiv
-\frac{p^5}{12}(p-1)^2(p-2)(1-\ii)B_{p-3}
\pmod{p^6}.
\end{equation}
Moreover,
\[
(1-\ii)p^5\mid\G_p(p)
\qquad\text{in }\ZZ[\ii].
\]

Since $p-1\nmid p-3$, the von Staudt--Clausen theorem implies
\[
p\nmid\den(B_{p-3}),
\]
so the reduction of $B_{p-3}$ modulo $p$ is well defined. Consequently,
\[
v_p(\G_p(p))\ge6
\quad\Longleftrightarrow\quad
B_{p-3}\equiv0\pmod p,
\]
and equivalently,
\[
v_p(\G_p(p))=5
\quad\Longleftrightarrow\quad
B_{p-3}\not\equiv0\pmod p.
\]
\end{theorem}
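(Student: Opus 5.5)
Expanding with the binomial theorem gives
\[
\G_p(p)=\sum_{j=0}^{p}\binom pj \ii^{j}S_{p-j}(p-1)S_j(p-1),\qquad S_0(p-1)=p-1 .
\]
The plan is to estimate each term $p$-adically and keep everything modulo $p^6$.

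\textbf{Step 1: valuations of the ingredients.} For $1\le j\le p-1$ we have $p\mid\binom pj$. The classical Bernoulli expansion
\[
S_k(p-1)=\sum_{m\ge0}\frac{1}{k+1}\binom{k+1}{m}B_{k+1-m}\,p^{m}\cdot(\text{sign conventions})
\]
shows the following.
\begin{itemize}
\item If $k$ is even with $p-1\nmid k$, then $S_k(p-1)\equiv pB_k\pmod{p^2}$, with further terms $p^3\binom{k}{2}B_{k-2}/3$, and so on.
\item If $k\ge3$ is odd, then $S_k(p-1)\equiv \tfrac{k}{2}p^2B_{k-1}\pmod{p^3}$.
\end{itemize}
In every term $\{j,p-j\}$, one index is even and one is odd. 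So for $2\le j\le p-2$ the term has valuation at least $1+1+2=4$. The boundary terms need separate treatment: $j=0,p$ involve $S_p(p-1)$ times $(p-1)$, and $j=1,p-1$ involve $S_1(p-1)=p(p-1)/2$.

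\textbf{Step 2: the symmetry $j\leftrightarrow p-j$.} We have $\binom pj=\binom p{p-j}$, and the pair contributes $\binom pj S_jS_{p-j}(\ii^j+\ii^{p-j})$. Since $p\equiv3\pmod 4$, we get $\ii^{p-j}=-\ii^{-j}=-\overline{\ii^{j}}$, hence $\ii^j+\ii^{p-j}=\ii^j-\overline{\ii^j}$. This equals $2\ii\,\mathrm{Im}(\ii^j)$, which vanishes for even $j$ and equals $\pm2\ii$ for odd $j$. Equivalently, one can use the fourfold symmetry: the map $(a,b)\mapsto(b,p-a)$ sends $a+b\ii$ to $-\ii(a+b\ii)+p\ii$. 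Expanding $(\,-\ii z+p\ii)^p$ gives
\[
\G_p(1-(-\ii)^p)=\G_p(1-\ii)=\text{(correction terms with factors }p^k\text{)} .
\]
This relation expresses $\G_p$ through lower power sums $\G_{p-k}$ with explicit $p$-powers. It is the cleanest route, and it explains the factor $(1-\ii)$. Concretely, I would iterate it: $\G_p(p)(1-\ii)=-\sum_{k\ge1}\binom pk (p\ii)^k(-\ii)^{p-k}\G_{p-k}$. The $k=1$ term already carries $p\cdot p$, and $\G_{p-1}$, $\G_{p-2}$ themselves have valuation computable from Step 1 (each at least $1$). I would then need $\G_{p-k}$ modulo $p^{6-2-\ldots}$ for small $k$, up to roughly $k\le4$, since $\binom pk p^k$ has valuation at least $k+1$.

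\textbf{Step 3: computing the needed $\G_{p-k}$.} For these I would apply the binomial expansion of Step 1 again and use Kummer congruences to reduce every Bernoulli number $B_{p-1-2\ell}$ and similar to multiples of $B_{p-3}$ modulo $p$. Terms involving $B_{p-1}$ have denominator $p$ and must be handled exactly via $pB_{p-1}\equiv -1\pmod p$, and these produce the polynomial factors $(p-1)^2(p-2)$. I would then collect real and imaginary parts and divide by $(1-\ii)$. The divisibility $(1-\ii)p^5\mid\G_p$ in $\ZZ[\ii]$ follows because $(1-\ii)$ divides $a+b\ii-(a-b)$-type combinations, or more directly because the symmetry gives $\G_p\equiv$ a multiple of $(1-\ii)$ with integral structure and $p$ is odd. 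The final equivalences about $v_p$ are immediate once (\ref{eq:main-inert}) holds, since $12$, $p-1$, $p-2$ and $1-\ii$ are $p$-units.

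\textbf{Main obstacle.} The hard part is Step 3: showing that all contributions of order $p^4$ cancel exactly, and that the $p^5$ coefficient collapses to a single multiple of $B_{p-3}$. This needs second-order Bernoulli expansions of the odd power sums, with terms like $p^4 B_{k-3}$, together with the identity $\sum_j\binom pj/p\cdot(\ldots)$ relating to harmonic-type sums $\sum 1/j^2\equiv0$ and $\sum1/j^3$, which is Wolstenholme-like and expressible via $B_{p-3}$. I would first try the symmetry route to minimize the number of terms. I would sanity-check the constant numerically for $p=7,11,19$ before attempting the general cancellation.
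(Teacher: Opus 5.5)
Your main route, the rotation recursion, differs from the paper's and can be made to work. As written, though, the proposal contains errors and leaves the decisive computation undone.

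\textbf{Errors in Step~2.} For $p\equiv3\pmod4$ one has $\ii^p=-\ii$, so $\ii^{p-j}=-\ii\cdot\ii^{-j}$, not $-\ii^{-j}$. The pair $\{j,p-j\}$ therefore contributes $\binom pjS_jS_{p-j}\,\ii^j(1-\ii)$ for even $j$ and $\binom pjS_jS_{p-j}\,\ii^j(1+\ii)$ for odd $j$, and is never $0$. With your identity every pair would be purely imaginary, which together with Corollary~\ref{cor:location} would force $\G_p(p)=0$. So this pairing is not ``equivalent'' to the rotation argument. Also, the bijection $(a,b)\mapsto(b,p-a)$ gives
\[
(1-\ii)\,\G_p(p)=\sum_{k=1}^{p}\binom pk(p\ii)^k(-\ii)^{p-k}\,\G_{p-k}(p)
\]
with a plus sign, which matters for the constant.

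\textbf{The gap is Step~3.} Since $v_p\bigl(\binom pk p^k\bigr)=k+1$ for $1\le k\le p-1$, you need exactly $\G_{p-1}\bmod p^4$, $\G_{p-2}\bmod p^3$, $\G_{p-3}\bmod p^2$ and $\G_{p-4}\bmod p$. ``Valuation at least $1$'' for $\G_{p-1}$ is far too weak.

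The tools you name are the wrong ones. Kummer congruences only relate $B_m/m$ for indices congruent modulo $p-1$, so they can never turn $B_{p-5},B_{p-7},\dots$ into multiples of $B_{p-3}$. And $B_{p-1}$ never enters this route.

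What must be shown is that no other Bernoulli number survives, and that is short:
\begin{itemize}
\item Use the odd-index form of $\G_{p-1}$ from the proof of Theorem~\ref{thm:general-depth-three}. Every summand with odd $j$, $3\le j\le p-4$, is a product of two odd power sums each $\equiv0\pmod{p^2}$ (Lemma~\ref{lem:odd-power-sums-mod-p2}). Hence $\G_{p-1}\equiv\ii p(p-1)^2S_{p-2}\equiv\frac{\ii}{2}p^3(p-1)^2(p-2)B_{p-3}\pmod{p^4}$.
\item Theorem~\ref{thm:small-n-bernoulli} gives $\G_{p-2}\equiv-(1+\ii)(p-2)p^2B_{p-3}\pmod{p^3}$.
\item Corollary~\ref{cor:evenBn} gives $\G_{p-3}\equiv-2pB_{p-3}\pmod{p^2}$.
\item Theorem~\ref{thm:modp} gives $\G_{p-4}\equiv0\pmod p$.
\end{itemize}

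Every term of the recursion then already has valuation $\ge5$, so no order-$p^4$ cancellation or harmonic sums are needed. The $k=1,2,3$ terms are $\equiv-p^5B_{p-3}$, $p^5(1-\ii)B_{p-3}$ and $\tfrac23\ii p^5B_{p-3}\pmod{p^6}$. So $(1-\ii)\G_p(p)\equiv-\tfrac{\ii}{3}p^5B_{p-3}$, i.e.\ $\G_p(p)\equiv\tfrac{p^5}{6}(1-\ii)B_{p-3}\pmod{p^6}$. This agrees with \eqref{eq:main-inert} since $(p-1)^2(p-2)\equiv-2\pmod p$. The divisibility claim follows as in the paper: $\G_p(p)=x(1-\ii)$ with $x\in\ZZ$ by Corollary~\ref{cor:location}, and $p^5\mid x$.

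\textbf{Comparison with the paper.} The paper expands $\G_p(p)$ directly and must do three things:
\begin{itemize}
\item cancel the $B_{p-1}$ terms in the endpoint part (Lemma~\ref{lem:endpoint-dichotomy});
\item compute the boundary terms $j\in\{2,3,p-3,p-2\}$ separately;
\item kill the bulk products $B_kB_{p-1-k}$ with the involution of Lemma~\ref{lem:bulk-involution}.
\end{itemize}
The rotation recursion replaces all three by four lower-exponent congruences that are essentially already in Section~\ref{sec_3}, and it makes depth $5$ immediate. But none of that computation is in your proposal, and the plan you sketch for it points in the wrong direction. As submitted it is an outline rather than a proof.
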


\medskip

\subsection*{Conceptual structure of the results}

Three mechanisms govern the congruences in this paper.

First, the square is invariant under interchange of its coordinates.
This gives
\[
\G_n(p)=\ii^n\overline{\G_n(p)},
\]
and hence
\[
\G_n(p)\in
\begin{cases}
\RR,&n\equiv0\pmod4,\\
(1+\ii)\RR,&n\equiv1\pmod4,\\
\ii\RR,&n\equiv2\pmod4,\\
(1-\ii)\RR,&n\equiv3\pmod4.
\end{cases}
\]
Thus the recurrent decomposition according to $n\bmod4$ is forced by
the fourfold symmetry of the summation domain.

Second, the binomial factorization (Lemma~\ref{lem:binomfactor})
\[
\G_n(p)
=
\sum_{j=0}^{n}
\binom nj\ii^j
S_{n-j}(p-1)S_j(p-1)
\]
reduces the two-dimensional Gaussian sum to ordinary power sums.
Since (see Lemma~\ref{lem:powersum_modp})
\[
S_m(p-1)\equiv0\pmod p
\qquad\text{unless }p-1\mid m,
\] 
higher divisibility is obtained from the simultaneous occurrence of
several such factors and from cancellations between the surviving
terms.

Third, Faulhaber's formula gives the $p$-adic expansion
\[
S_m(p-1)
=
\sum_{\ell=1}^{m+1}
\frac{p^\ell}{\ell}\binom{m}{\ell-1}B_{m-\ell+1}.
\]
The vanishing of odd Bernoulli numbers, together with the fourfold
symmetry, implies that for
\[
n=4t+r,\qquad r\in\{0,1,2,3\},
\]
the first symmetry-compatible term has order $p^{r+1}$ and is a
nonzero rational multiple of $B_{4t}$. Thus Bernoulli irregular pairs
are precisely the places where the generic mod-$4$ valuation pattern
can increase.

For multiples of $p-1$ at inert primes, an additional finite-field
symmetry appears. The map
\[
t\longmapsto\frac{t-\ii}{t+\ii}
\]
identifies $\mathbb P^1(\FF_p)$ with the norm-one subgroup of
$\FF_{p^2}^{\times}$. This converts the reduction of
$\G_{r(p-1)}(p)$ into a power sum on a cyclic group of order $p+1$.

Finally, at the special exponent $n=p$, the non-$p$-integral Bernoulli
number $B_{p-1}$ appears. For split primes its contribution survives,
giving depth $2$. For inert primes the $B_{p-1}$ terms cancel, and a
second involutive cancellation eliminates the bulk of the binomial
sum. The first remaining obstruction is therefore $B_{p-3}$ at depth
$5$.

\subsection*{Organization of the paper}

In Section~\ref{sec_2} we introduce the two--parameter Gaussian power sums
\[
\F_n(k,m)=\sum_{a=1}^{k}\sum_{b=1}^{m}(a+b\ii)^n\in\ZZ[\ii],
\qquad
S_r(N)=\sum_{t=1}^{N}t^r\in\ZZ,
\]
and establish a structural reduction of $\F_n(k,m)$ to the classical power sums via the binomial--power--sum factorization
(Lemma~\ref{lem:binomfactor}). This immediately implies that, for fixed $n$, the function $(k,m)\mapsto \F_n(k,m)$ is polynomial
in $(k,m)$ with coefficients in $\QQ(\ii)$ (Proposition~\ref{prop:polynomiality}). 

In Section~\ref{sec_3} we specialize to the prime--square sums
\[
\G_n(p)=\F_n(p-1,p-1)\in\ZZ[\ii],
\qquad
v_p(x+y\ii)=\min\{v_p(x),v_p(y)\},
\]
and prove our first arithmetic results: congruences modulo $p$ and $p^2$.
After recording two standard tools (power sums modulo $p$ and a Lucas--type vanishing for certain stretched binomial coefficients),
we prove a clean mod-$p$ dichotomy (Theorem~\ref{thm:modp}): if $(p-1)\nmid n$ then $\G_n(p)\equiv 0\pmod p$, whereas for
$n=r(p-1)$ with $1\le r<p$ one has $\G_n(p)\equiv 1+\ii^{\,r(p-1)}\pmod p$.
We then pass to reduction modulo $p^2$ in the range $1\le n\le p-2$, showing that only the endpoint terms in the binomial factorization
contribute (Theorem~\ref{thm:modp2_endpoint}). For even exponents $2\le n\le p-3$ this yields an explicit Bernoulli-number refinement
(Corollary~\ref{cor:evenBn}), which already forces additional $p$-divisibility in certain residue classes $n\bmod 4$.

In Section~\ref{sec_3} we also determine the small-exponent valuation
pattern.  For $n=4t+r\le p-2$, with $r\in\{0,1,2,3\}$, we obtain a
leading congruence of order $p^{r+1}$ whose coefficient is an explicit
$p$-adic unit times $B_{4t}$. Thus the generic mod-$4$ valuation
pattern is governed precisely by the Bernoulli irregular pairs
$(p,4t)$.  In the same section we prove that, for inert primes
$p\equiv3\pmod4$ and arbitrary $m\ge1$,
\[
v_p\bigl(\G_{m(p-1)}(p)\bigr)=0
\quad\text{if $m$ is even},
\]
whereas
\[
v_p\bigl(\G_{m(p-1)}(p)\bigr)\ge3
\quad\text{if $m$ is odd}.
\]

The next section develops a higher-precision $p$-adic analysis at the
special exponent $n=p$. Starting from an exact Faulhaber identity for
$S_r(p-1)$, we derive uniform truncations modulo $p^6$. We then split
the binomial decomposition of $\G_p(p)$ into endpoint, boundary, and
bulk contributions. In the split case $p\equiv1\pmod4$ this gives
\[
\G_p(p)\equiv p^2(1+\ii)\pmod{p^3},
\]
while in the inert case $p\equiv3\pmod4$ it gives the congruence
\eqref{eq:main-inert}. The valuation in the inert case is therefore
controlled by the residue of $B_{p-3}$ modulo $p$.

Finally, we prove closed formulas for $v_3(\G_n(3))$ and
$v_5(\G_n(5))$ for all $n\ge1$, and we record a universal
$p$-adic generating-function expansion that yields higher-depth
congruences for $1\le n\le p-2$.

\begin{table}[H]
\centering
\tiny
\renewcommand{\arraystretch}{1.35}
\setlength{\tabcolsep}{8pt}

\begin{tabular}{p{0.46\textwidth} p{0.46\textwidth}}
\toprule
\multicolumn{1}{c}{\(\boldsymbol{p=3}\)}
&
\multicolumn{1}{c}{\(\boldsymbol{p=5}\)}
\\
\midrule

For \(n=4Q+r\), \(0\le r\le3\),
\[
v_3:=v_3\bigl(\G_n(3)\bigr)
=
r+v_3\binom nr.
\]
Equivalently,
\[
v_3   
=
\begin{cases}
0,
&n\equiv0\pmod4,\\
1+v_3(n),
&n\equiv1\pmod4,\\
2+v_3(n)+v_3(n-1),
&n\equiv2\pmod4,\\
2+v_3(n)+v_3(n-1)+v_3(n-2),
&n\equiv3\pmod4.
\end{cases}
\]
&
For \(n=4Q+r\), \(0\le r\le3\),
\[
v_5:=v_5\bigl(\G_n(5)\bigr)
=
r+v_5\binom nr.
\]
Equivalently,
\[
v_5
=
\begin{cases}
0,
&n\equiv0\pmod4,\\
1+v_5(n),
&n\equiv1\pmod4,\\
2+v_5(n)+v_5(n-1),
&n\equiv2\pmod4,\\
3+v_5(n)+v_5(n-1)+v_5(n-2),
&n\equiv3\pmod4.
\end{cases}
\]
\\

\midrule
\multicolumn{1}{c}{\(\boldsymbol{p\equiv3\pmod4}\)}
&
\multicolumn{1}{c}{\(\boldsymbol{p\equiv1\pmod4}\)}
\\
\midrule

\multicolumn{2}{c}{
\(\displaystyle
\text{For }n=4t+r\le p-2,\quad t\ge1,\quad r\in\{0,1,2,3\},
\)
}
\\[-1mm]

\multicolumn{2}{p{0.94\textwidth}}{
\[
\G_{4t+r}(p)
\equiv
\kappa_r(4t+r)_r\,p^{r+1}B_{4t}
\pmod{p^{r+2}},
\]
where
\[
\kappa_0=-2,\qquad
\kappa_1=-(1+\ii),\qquad
\kappa_2=-\frac{\ii}{2},\qquad
\kappa_3=\frac{1-\ii}{12}.
\]
Consequently,
\[
v_p\bigl(\G_{4t+r}(p)\bigr)\ge r+1,
\]
with equality if and only if
\[
p\nmid\operatorname{num}(B_{4t}).
\]
If \(p\mid\operatorname{num}(B_{4t})\), then
\[
v_p\bigl(\G_{4t+r}(p)\bigr)\ge r+2.
\]
}
\\

\midrule

\multicolumn{2}{p{0.94\textwidth}}{
For every \(p\ge5\), if
\(
n\equiv2\pmod4,
\)
then
\[
v_p(\G_n(p))\ge2.
\]
If, in addition,
\(
n\not\equiv2\pmod{p-1},
\)
then
\[
v_p(\G_n(p))\ge3.
\]
}
\\

\midrule

For \(n=r(p-1)\), \(r\ge1\),
\[
\G_{r(p-1)}(p)
\equiv
1+(-1)^r-(p+1)\mathbf 1_{p+1\mid r}
\pmod p.
\]
Thus
\[
\G_{r(p-1)}(p)\equiv
\begin{cases}
0\pmod p,
&r\text{ odd},\\
2\pmod p,
&r\text{ even},\ p+1\nmid r,\\
1\pmod p,
&p+1\mid r.
\end{cases}
\]
For \(p\ge7\),
\[
v_p\bigl(\G_{r(p-1)}(p)\bigr)
=
0
\qquad(r\text{ even}),
\]
whereas
\[
v_p\bigl(\G_{r(p-1)}(p)\bigr)
\ge3
\qquad(r\text{ odd}).
\]
&
For \(n=r(p-1)\), \(1\le r<p\),
\[
\G_{r(p-1)}(p)
\equiv
1+\ii^{r(p-1)}
=
2
\pmod p.
\]
Hence
\[
v_p\bigl(\G_{r(p-1)}(p)\bigr)=0
\qquad(1\le r<p).
\]
No corresponding formula for arbitrary \(r\ge p\) is proved in the
article.
\\

\midrule

For \(n=p\), \(p\ge7\),
\[
\G_p(p)
\equiv
-\frac{p^5}{12}(p-1)^2(p-2)
B_{p-3}(1-\ii)
\pmod{p^6}.
\]
Therefore
\[
v_p(\G_p(p))\ge5.
\]
&
For \(n=p\),
\[
\G_p(p)
\equiv
p^2(1+\ii)
\pmod{p^3}.
\]
Consequently,
\[
v_p(\G_p(p))=2.
\]
\\

\bottomrule
\end{tabular}

\caption{Summary of the main valuation and congruence results. The
first part records the complete formulas for \(p=3\) and \(p=5\).
The columns are then relabelled according to the inert and split cases
\(p\equiv3\pmod4\) and \(p\equiv1\pmod4\). Results independent of the
splitting behavior are displayed across both columns.}
\label{tab:summary-results}
\end{table}

\section{Definitions and an exact reduction to classical power sums}\label{sec_2}

\begin{definition}
For $n\in\ZZ_{\ge 0}$ and $k,m\in\ZZ_{\ge 1}$ define
\[
\F_n(k,m)=\sum_{a=1}^{k}\sum_{b=1}^{m}(a+b\ii)^n \in \ZZ[\ii].
\]
Also define the classical power sums
\[
S_r(N):=\sum_{t=1}^{N} t^r \in \ZZ \qquad (r\in\ZZ_{\ge 0},\ N\in\ZZ_{\ge 1}).
\]
\end{definition}

\begin{lemma}[Binomial--power-sum factorization]\label{lem:binomfactor}
For every $n\ge 0$ and $k,m\ge 1$,
\[
\F_n(k,m)=\sum_{j=0}^{n}\binom{n}{j}\,\ii^{\,j}\,S_{n-j}(k)\,S_j(m).
\]
\end{lemma}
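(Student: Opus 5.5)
The identity is a finite algebraic statement, so I will prove it directly: expand each summand by the binomial theorem and then interchange the order of summation.

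First I fix $a\in\{1,\dots,k\}$ and $b\in\{1,\dots,m\}$. Since $\ZZ[\ii]$ is a commutative ring, the binomial theorem gives
\[
(a+b\ii)^n=\sum_{j=0}^{n}\binom{n}{j}a^{n-j}(b\ii)^{j}=\sum_{j=0}^{n}\binom{n}{j}\ii^{\,j}a^{n-j}b^{j}.
\]

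Next I sum this over $1\le a\le k$ and $1\le b\le m$. All three sums are finite, so I can bring the $j$-sum to the outside. For fixed $j$ the remaining double sum factors:
\[
\sum_{a=1}^{k}\sum_{b=1}^{m}a^{n-j}b^{j}=\Bigl(\sum_{a=1}^{k}a^{n-j}\Bigr)\Bigl(\sum_{b=1}^{m}b^{j}\Bigr)=S_{n-j}(k)\,S_j(m).
\]
Substituting this back yields exactly the stated formula for $\F_n(k,m)$.

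The only point needing care is the exponent-zero case. Here I use the convention $t^0=1$ for every $t\ge1$, which gives $S_0(N)=N$. This makes the terms $j=0$ and $j=n$ correct, and it also covers $n=0$, where both sides equal $km$. Beyond checking this convention there is no real obstacle.
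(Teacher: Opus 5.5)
Your proposal is correct and is essentially the paper's own proof: expand $(a+b\ii)^n$ by the binomial theorem, interchange the finite sums, and factor the inner double sum as $S_{n-j}(k)\,S_j(m)$. Your explicit check of the convention $t^0=1$, which gives $S_0(N)=N$ and makes both sides equal $km$ when $n=0$, is something the paper leaves implicit.
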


\begin{proof}
Expand \((a+b\ii)^n=\sum_{j=0}^n \binom{n}{j}a^{n-j}(b\ii)^j\). Since all sums are finite,
\[
\F_n(k,m)=\sum_{a=1}^k\sum_{b=1}^m\sum_{j=0}^n \binom{n}{j}\ii^{\,j}a^{n-j}b^j
=\sum_{j=0}^n \binom{n}{j}\ii^{\,j}\Bigl(\sum_{a=1}^k a^{n-j}\Bigr)\Bigl(\sum_{b=1}^m b^j\Bigr),
\]
which is the claimed formula.
\end{proof}

\begin{proposition}[Polynomiality in $(k,m)$]
\label{prop:polynomiality}
For fixed $n$, the function $(k,m)\mapsto \F_n(k,m)$ agrees with a polynomial in $(k,m)$ with Gaussian rational
coefficients, of total degree $n+2$.
\end{proposition}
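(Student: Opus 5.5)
The plan is to combine Lemma~\ref{lem:binomfactor} with Faulhaber's formula for each classical power sum appearing in it. By Lemma~\ref{lem:binomfactor},
\[
\F_n(k,m)=\sum_{j=0}^{n}\binom{n}{j}\ii^{\,j}S_{n-j}(k)S_j(m),
\]
so it suffices to show that each $S_r(N)$, for $0\le r\le n$, agrees on positive integers $N$ with a polynomial in $N$ of degree exactly $r+1$ with rational coefficients. For this I will invoke Faulhaber's formula recalled in the introduction,
\[
S_r(N)=\frac{B_{r+1}(N+1)-B_{r+1}}{r+1}
=\frac{1}{r+1}\sum_{\ell=1}^{r+1}\binom{r+1}{\ell}B_{r+1-\ell}N^{\ell},
\]
taken with the convention $B_1=+\tfrac12$ (equivalently, expand $B_{r+1}(N+1)$ and cancel the constant term). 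This shows that $S_r$ is a polynomial $P_r\in\QQ[N]$ with leading term $N^{r+1}/(r+1)$ and no constant term. The case $r=0$ is consistent, since $S_0(N)=N$.

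Substituting into the factorization gives
\[
\F_n(k,m)=\sum_{j=0}^n\binom nj\ii^{\,j}P_{n-j}(k)P_j(m),
\]
which is a polynomial in $\QQ(\ii)[k,m]$. Each summand has total degree $(n-j+1)+(j+1)=n+2$. The top homogeneous component is therefore
\[
\sum_{j=0}^n\binom nj\ii^{\,j}\frac{k^{n-j+1}m^{j+1}}{(n-j+1)(j+1)}.
\]
The monomials $k^{n-j+1}m^{j+1}$ are distinct for distinct $j$, and each coefficient is nonzero, so no cancellation can occur. Hence the total degree is exactly $n+2$.

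The only real obstacle is bookkeeping: fixing the Bernoulli sign convention so that the expansion of $S_r(N)$ is correct, and checking that the top-degree monomials cannot cancel. The distinctness of the exponent pairs settles the second point. As a sanity check, the top component equals $\int_0^k\int_0^m(x+y\ii)^n\,dy\,dx$, which is the expected leading behavior of the lattice sum.
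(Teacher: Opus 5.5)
Your proposal is correct and follows essentially the same route as the paper: apply Lemma~\ref{lem:binomfactor}, then Faulhaber's formula, to get a polynomial in $\QQ(\ii)[k,m]$ of total degree at most $n+2$. For exactness, the paper points to the $j=0$ monomial $k^{n+1}m/(n+1)$, whereas you observe that the top-degree monomials $k^{n-j+1}m^{j+1}$ are pairwise distinct with nonzero coefficients; this is the same argument made slightly more explicit.
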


\begin{proof}
By Lemma~\ref{lem:binomfactor}, $\F_n(k,m)$ is a finite $\QQ(\ii)$-linear combination of products
$S_{n-j}(k)S_j(m)$. By Faulhaber's formula, each $S_r(N)$ is a polynomial in $N$ of degree $r+1$
with rational coefficients. Hence $S_{n-j}(k)S_j(m)$ has total degree at most $(n-j+1)+(j+1)=n+2$. Moreover, the 
$j=0$ term contributes $\frac{1}{n+1}k^{n+1}m$ to the top-degree part, so the total degree is exactly $n+2$.
\end{proof}

\subsection{A symmetry specific to the square case $k=m$}

For $N\ge 2$ define the square sum
\[
\G_n(N):=\F_n(N-1,N-1)=\sum_{a=1}^{N-1}\sum_{b=1}^{N-1}(a+b\ii)^n.
\]

\begin{lemma}[Square symmetry]\label{lem:squaresym}
For every $n\ge 0$ and $N\ge 1$,
\[
\G_n(N)=\ii^{\,n}\,\overline{\G_n(N)}.
\]
\end{lemma}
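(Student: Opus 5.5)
The plan is to exploit the symmetry of the summation domain under swapping the two coordinates. The square $\{1,\dots,N-1\}^2$ is invariant under $(a,b)\mapsto(b,a)$. The key identity is
\[
b+a\ii=\ii\,(a-b\ii)=\ii\,\overline{(a+b\ii)},
\]
which holds because $\ii(a-b\ii)=a\ii+b$.

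Raising this identity to the $n$-th power gives
\[
(b+a\ii)^n=\ii^{\,n}\,\overline{(a+b\ii)}^{\,n}=\ii^{\,n}\,\overline{(a+b\ii)^n}.
\]

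We then sum over all $(a,b)\in\{1,\dots,N-1\}^2$. On the left, reindexing by the bijection $(a,b)\mapsto(b,a)$ of the square returns exactly $\G_n(N)$. On the right, complex conjugation is additive, so the sum equals $\ii^{\,n}\,\overline{\G_n(N)}$. Comparing the two sides yields the lemma.

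For $N=1$ the sum is empty, both sides are $0$, and the statement holds trivially.

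There is no real obstacle here. The only point needing care is the sign convention in the rotation identity: one must check that it is $\ii\,\overline{z}$ rather than $-\ii\,\overline{z}$. Writing out $\ii(a-b\ii)=b+a\ii$ confirms the choice above.

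As an alternative route, one could argue from Lemma~\ref{lem:binomfactor} with $k=m=N-1$. Substituting $j\mapsto n-j$ in the sum, and using $\overline{\ii^{\,j}}=\ii^{-j}$ together with $\ii^{\,n-j}=\ii^{\,n}\ii^{-j}$, gives the same identity. I would present the direct bijection argument, since it is shorter.
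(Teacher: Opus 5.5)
Your proposal is correct and matches the paper's proof: both use the invariance of the square under $(a,b)\mapsto(b,a)$ together with the identity $b+a\ii=\ii(a-b\ii)$, raise it to the $n$-th power, and sum over the domain. Your remark on the empty case $N=1$ and the alternative check via Lemma~\ref{lem:binomfactor} are both valid.
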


\begin{proof}
Because the domain is symmetric in $a$ and $b$,
\[
\G_n(N)=\sum_{a=1}^{N-1}\sum_{b=1}^{N-1}(b+a\ii)^n.
\]
But $b+a\ii=\ii(a-b\ii)$, hence $(b+a\ii)^n=\ii^{\,n}(a-b\ii)^n$. Therefore
\[
\G_n(N)=\ii^{\,n}\sum_{a=1}^{N-1}\sum_{b=1}^{N-1}(a-b\ii)^n=\ii^{\,n}\,\overline{\G_n(N)}.
\]
\end{proof}

\begin{corollary}[Exact location in the complex plane]\label{cor:location}
Write $\G_n(N)=x+y\ii$ with $x,y\in\ZZ$. Then:
\[
\begin{array}{rcl}
n\equiv 0\ (\mathrm{mod}\ 4) &\Rightarrow& y=0 \quad (\text{$\G_n(N)$ is real}),\\
n\equiv 2\ (\mathrm{mod}\ 4) &\Rightarrow& x=0 \quad (\text{$\G_n(N)$ is purely imaginary}),\\
n\equiv 1\ (\mathrm{mod}\ 4) &\Rightarrow& x=y,\\
n\equiv 3\ (\mathrm{mod}\ 4) &\Rightarrow& x=-y.
\end{array}
\]
\end{corollary}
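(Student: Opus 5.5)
The corollary follows directly from Lemma~\ref{lem:squaresym} by comparing real and imaginary parts in each residue class of $n$ modulo $4$. Write $\G_n(N)=x+y\ii$ with $x,y\in\ZZ$. The lemma gives
\[
x+y\ii=\ii^{\,n}(x-y\ii).
\]
Since $\ii^{\,n}$ depends only on $n\bmod 4$, there are four cases, and in each I expand the right-hand side and match components.

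\emph{Case $n\equiv0\pmod 4$.} Here $\ii^{\,n}=1$, so $x+y\ii=x-y\ii$. Hence $2y=0$ and $y=0$.

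\emph{Case $n\equiv2\pmod 4$.} Here $\ii^{\,n}=-1$, so $x+y\ii=-x+y\ii$. Hence $x=0$.

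\emph{Case $n\equiv1\pmod 4$.} Here $\ii^{\,n}=\ii$, and $\ii(x-y\ii)=y+x\ii$. Equating real parts gives $x=y$; the imaginary parts give the same condition.

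\emph{Case $n\equiv3\pmod 4$.} Here $\ii^{\,n}=-\ii$, and $-\ii(x-y\ii)=-y-x\ii$. Equating real parts gives $x=-y$; the imaginary part gives $y=-x$, which is consistent.

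No step is a genuine obstacle. The only point needing care is that comparing components is legitimate because $x,y$ are real integers and $\{1,\ii\}$ is an $\RR$-basis of $\mathbb C$. Finally, the same four identities restate the location table in the introduction: $\G_n(N)$ lies in $\RR$, $(1+\ii)\RR$, $\ii\RR$, or $(1-\ii)\RR$ according as $n\equiv 0,1,2,3 \pmod 4$.
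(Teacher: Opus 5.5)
Your proposal is correct and follows essentially the same route as the paper: substitute $\G_n(N)=x+y\ii$ into Lemma~\ref{lem:squaresym} to get $x+y\ii=\ii^{\,n}(x-y\ii)$, then compare real and imaginary parts in the four residue classes of $n$ modulo $4$. Your case computations are all correct, and you write out explicitly the step the paper leaves as ``comparing real and imaginary parts.''
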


\begin{proof}
From Lemma~\ref{lem:squaresym} we have $x+y\ii=\ii^{\,n}(x-y\ii)$. Comparing real and imaginary parts
in the four cases $n\ (\mathrm{mod}\ 4)$ gives the stated equalities.
\end{proof}

\section{Congruences mod $p$ and $p^2$}\label{sec_3}

Fix an odd prime $p$ and define the main object
\[
\G_n(p):=\F_n(p-1,p-1)=\sum_{a=1}^{p-1}\sum_{b=1}^{p-1}(a+b\ii)^n\in\ZZ[\ii].
\]
We also write $v_p(z)$ for the $p$-adic valuation of $z\in\ZZ$, and for $z=x+y\ii\in\ZZ[\ii]$ we set
\[
v_p(z):=\min\{v_p(x),v_p(y)\}.
\]

\subsection{Two standard lemmas}

\begin{lemma}[Power sums modulo \(p\)]\label{lem:powersum_modp}
Let \(p\) be prime and \(t\ge 0\). Then
\[
\sum_{a=1}^{p-1} a^t \equiv
\begin{cases}
-1 \pmod p, & (p-1)\mid t,\\
0 \pmod p,  & (p-1)\nmid t.
\end{cases}
\]
\end{lemma}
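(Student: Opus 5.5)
The plan is to use the cyclic structure of $(\ZZ/p\ZZ)^\times$. Fix a primitive root $g$ modulo $p$. The map $a\mapsto ga$ permutes the residues $\{1,\dots,p-1\}$ modulo $p$. Write $S=\sum_{a=1}^{p-1}a^t$. Reducing modulo $p$ and reindexing then gives
\[
S\equiv\sum_{a=1}^{p-1}(ga)^t=g^t S\pmod p,
\]
so $(g^t-1)S\equiv0\pmod p$.

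We then split into two cases according to whether $(p-1)\mid t$.

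\textbf{Case $(p-1)\nmid t$.} Since $g$ has order exactly $p-1$, we have $g^t\not\equiv1\pmod p$. Because $p$ is prime, $g^t-1$ is invertible modulo $p$, and cancelling it gives $S\equiv0\pmod p$.

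\textbf{Case $(p-1)\mid t$.} Here Fermat's little theorem gives $a^t\equiv1\pmod p$ for each $1\le a\le p-1$. Summing these $p-1$ congruences yields $S\equiv p-1\equiv-1\pmod p$.

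This also covers $t=0$: then $(p-1)\mid 0$, and the sum is $p-1\equiv-1$, consistent with the first line of the statement.

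There is no real obstacle in this argument. The only input needed is the existence of a primitive root modulo $p$, which is standard and is the one point to cite. If one prefers to avoid primitive roots, an alternative is Lagrange's theorem: the polynomial $x^t-1$ has at most $\deg$-many roots in $\FF_p$, where $\deg=\gcd(t,p-1)<p-1$, so some unit $c$ satisfies $c^t\ne1$. Using this $c$ in place of $g$ in the reindexing argument gives the same conclusion.
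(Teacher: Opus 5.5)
Your proof is correct and takes essentially the same approach as the paper: both fix a primitive root $g$, handle $(p-1)\mid t$ by Fermat's little theorem, and use $g^t\not\equiv 1 \pmod p$ to get vanishing otherwise. The only difference is cosmetic: you derive $(g^t-1)S\equiv 0 \pmod p$ by the reindexing $a\mapsto ga$, whereas the paper writes $S$ as the geometric series $\sum_{u=0}^{p-2}(g^t)^u$ and sums it directly.
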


\begin{proof}
If \((p-1)\mid t\), then \(a^t\equiv 1\pmod p\) for all \(a\in \mathbb{F}_p^\times\), so
\[
\sum_{a=1}^{p-1} a^t \equiv \sum_{a=1}^{p-1} 1 = p-1 \equiv -1 \pmod p.
\]
If \((p-1)\nmid t\), fix a generator \(g\) of the cyclic group \(\mathbb{F}_p^\times\). Then
\[
\sum_{a=1}^{p-1} a^t
= \sum_{u=0}^{p-2} (g^u)^t
= \sum_{u=0}^{p-2} (g^t)^u.
\]
Since \((p-1)\nmid t\), we have \(g^t\neq 1\) in \(\mathbb{F}_p^\times\), hence this is a geometric series with sum
\[
\sum_{u=0}^{p-2} (g^t)^u = \frac{(g^t)^{p-1}-1}{g^t-1}=0
\]
in \(\mathbb{F}_p\), which yields the desired congruence.
\end{proof}

\begin{lemma}[Lucas--vanishing for stretched binomial coefficients]\label{lem:binom_lucas}
Let \(p\) be prime and \(1\le r<p\). Then for every integer \(t\) with \(0<t<r\),
\[
\binom{r(p-1)}{t(p-1)} \equiv 0 \pmod p.
\]
Moreover,
\[
\binom{r(p-1)}{0}\equiv 1 \pmod p,
\qquad
\binom{r(p-1)}{r(p-1)}\equiv 1 \pmod p.
\]
\end{lemma}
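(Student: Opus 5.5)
The plan is to apply Lucas's theorem directly to the base-$p$ digits. Since $1\le r<p$, write
\[
r(p-1)=rp-r=(r-1)p+(p-r),
\]
so $r(p-1)$ has the two base-$p$ digits $(r-1,\;p-r)$, both lying in $[0,p-1]$. Similarly, for $0<t<r$,
\[
t(p-1)=(t-1)p+(p-t),
\]
with digits $(t-1,\;p-t)$, valid because $1\le t\le p-1$.

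By Lucas's theorem,
\[
\binom{r(p-1)}{t(p-1)}\equiv\binom{r-1}{t-1}\binom{p-r}{p-t}\pmod p .
\]
Since $t<r$, we have $p-t>p-r$, so $\binom{p-r}{p-t}=0$. This gives the vanishing claim.

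The endpoint congruences are trivial, since both binomials equal $1$ exactly. This covers $t=0$ and $t=r$; the case $t=r$ is also consistent with Lucas's theorem.

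The only point requiring care is checking that the digit expansions are legitimate, i.e. that $0\le p-r\le p-1$ and $0\le r-1\le p-1$. Both hold because $1\le r\le p-1$. There is no real obstacle here.
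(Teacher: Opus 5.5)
Your proposal is correct and follows essentially the same route as the paper: you write the base-$p$ expansions $r(p-1)=(r-1)p+(p-r)$ and $t(p-1)=(t-1)p+(p-t)$, apply Lucas's theorem, and use $\binom{p-r}{p-t}=0$ because $p-t>p-r$. Your added check that the digits lie in $[0,p-1]$ is a small point of care that the paper states only briefly.
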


\begin{proof}
Expand the numbers in base \(p\):
\[
r(p-1)=rp-r=(r-1)p+(p-r),
\qquad
t(p-1)=tp-t=(t-1)p+(p-t),
\]
where \(0\le p-r\le p-1\) and \(0\le p-t\le p-1\). Since \(0<t<r\), we have \(p-t>p-r\).

By Lucas' theorem,
\[
\binom{r(p-1)}{t(p-1)}
\equiv
\binom{r-1}{t-1}\binom{p-r}{p-t}
\pmod p.
\]
But \(\binom{p-r}{p-t}=0\) as an integer because \(p-t>p-r\). Hence
\[
\binom{r(p-1)}{t(p-1)} \equiv 0 \pmod p.
\]
The endpoint congruences are immediate.
\end{proof}

\subsection{The reduction mod \(p\)}

\begin{theorem}[A mod \(p\) dichotomy for \(\G_n(p)\)]\label{thm:modp}
Let \(p\) be an odd prime.
\begin{enumerate}
\item If \((p-1)\nmid n\), then \(\G_n(p)\equiv 0\pmod p\) in \(\mathbb{Z}[\ii]\).
\item If \(n=r(p-1)\) with \(1\le r<p\), then
\[
\G_n(p)\equiv 1+\ii^{\,r(p-1)}\pmod p.
\]
In particular, \(\G_{p-1}(p)\equiv 0\pmod p\) if \(p\equiv 3\pmod 4\) and \(\G_{p-1}(p)\equiv 2\pmod p\)
if \(p\equiv 1\pmod 4\).
\end{enumerate}
\end{theorem}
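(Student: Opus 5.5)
The plan is to start from the binomial factorization of Lemma~\ref{lem:binomfactor} with $k=m=p-1$,
\[
\G_n(p)=\sum_{j=0}^{n}\binom nj \ii^{\,j}\,S_{n-j}(p-1)\,S_j(p-1),
\]
and reduce every term modulo $p$ using Lemma~\ref{lem:powersum_modp}. We take $S_0(p-1)=p-1\equiv -1$, which is consistent with the case $(p-1)\mid 0$ of that lemma. A term can be nonzero modulo $p$ only if both $(p-1)\mid j$ and $(p-1)\mid n-j$. In that case both power sums are $\equiv -1$, so their product is $\equiv 1$.

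For part 1, assume $(p-1)\nmid n$. Then for every $j$ at least one of $j$ and $n-j$ is not divisible by $p-1$, because otherwise their sum $n$ would be. Hence every summand carries a factor $\equiv 0 \pmod p$. The binomial coefficients and powers of $\ii$ lie in $\ZZ[\ii]$, so $\G_n(p)\equiv 0\pmod p$ in $\ZZ[\ii]$.

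For part 2, write $n=r(p-1)$ with $1\le r<p$. The only surviving indices are $j=t(p-1)$ with $0\le t\le r$, which gives
\[
\G_n(p)\equiv\sum_{t=0}^{r}\binom{r(p-1)}{t(p-1)}\ii^{\,t(p-1)}\pmod p.
\]
Lemma~\ref{lem:binom_lucas} kills the interior terms $0<t<r$ modulo $p$, and the endpoints contribute $1$ and $\ii^{\,r(p-1)}$. This is exactly the claimed congruence.

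For the special case $r=1$: when $p\equiv3\pmod4$ we have $p-1\equiv2\pmod4$, so $\ii^{p-1}=-1$ and the sum is $0$. When $p\equiv1\pmod4$ we have $\ii^{p-1}=1$, giving $2$.

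There is no real obstacle here. The only care needed is to check that the $j=0$ and $j=n$ endpoints are handled by the convention $S_0(p-1)=p-1$, and that the Lucas step requires $r<p$ so that the base-$p$ digits computed in Lemma~\ref{lem:binom_lucas} are valid.
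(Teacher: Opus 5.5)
Your proposal is correct and follows essentially the same route as the paper. The paper also starts from the binomial factorization, uses Lemma~\ref{lem:powersum_modp} to keep only the indices $j=t(p-1)$ with product $(-1)(-1)=1$, and applies Lemma~\ref{lem:binom_lucas} to kill the interior terms $0<t<r$.
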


\begin{proof}
Use Lemma~\ref{lem:binomfactor} with \(k=m=p-1\):
\[
\G_n(p)=\sum_{j=0}^{n}\binom{n}{j}\ii^{\,j}
\Big(\sum_{a=1}^{p-1} a^{n-j}\Big)
\Big(\sum_{b=1}^{p-1} b^{j}\Big).
\]
If \((p-1)\nmid n\), then for every \(j\) at least one of \(j\) or \(n-j\) is not divisible by \(p-1\).
By Lemma~\ref{lem:powersum_modp}, at least one of the two power sums is \(0\pmod p\), hence \(\G_n(p)\equiv 0\pmod p\).

Now suppose \(n=r(p-1)\) with \(1\le r<p\).

By Lemma~\ref{lem:powersum_modp}, the only terms in \(\G_n(p)\) that are non-zero are those for which both  \(j\) and \(n-j\) are divisible by \(p-1\). Let us focus on those. For such terms, we have \(j=t(p-1)\) for some \(t\in\{0,1,\dots,r\}\) since \(j\le n=r(p-1)\).
Thus
\[
\G_n(p)\equiv \sum_{t=0}^{r}\binom{r(p-1)}{t(p-1)}\ii^{\,t(p-1)}\cdot (-1)\cdot (-1)\pmod p.
\]
By Lemma~\ref{lem:binom_lucas}, all intermediate coefficients vanish modulo \(p\) for \(0<t<r\), leaving only
\(t=0\) and \(t=r\), and therefore
\[
\G_n(p)\equiv 1+\ii^{\,r(p-1)}\pmod p.
\]

The result for the special case where  \(n=p-1\) and  \(p\equiv 1\pmod 4\) follows from the fact that \(\ii^{\,p-1}=1\). The result for the other case also follows because \(\ii^{\,p-1}=-1\) if \(p\equiv 3\pmod 4\).
\end{proof}

\begin{remark}
The restriction $1\le r<p$ in Theorem~\ref{thm:modp} is needed only
for the Lucas-theorem argument. For inert primes and arbitrary
$r\ge1$, the exact reduction of $\G_{r(p-1)}(p)$ is given by
Theorem~\ref{thm:multiples-p-minus-one-inert}.
\end{remark}

\subsection{The reduction mod \(p^2\) for \(1\le n\le p-2\)}

\begin{theorem}[Endpoint reduction mod \(p^2\)]\label{thm:modp2_endpoint}
Let \(p\) be an odd prime and let \(1\le n\le p-2\). Then
\[
\G_n(p)\equiv (p-1)\Big(\sum_{a=1}^{p-1}a^n\Big)\,(1+\ii^{\,n})\pmod{p^2}
\qquad \text{in }\mathbb{Z}[\ii].
\]
\end{theorem}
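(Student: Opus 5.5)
We start from the binomial factorization of Lemma~\ref{lem:binomfactor} with $k=m=p-1$:
\[
\G_n(p)=\sum_{j=0}^{n}\binom nj\ii^{\,j}S_{n-j}(p-1)S_j(p-1).
\]
Since $1\le n\le p-2$, every index $j$ and $n-j$ lies in $\{0,1,\dots,p-2\}$. The only one of these divisible by $p-1$ is $0$.

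By Lemma~\ref{lem:powersum_modp}, $S_m(p-1)\equiv0\pmod p$ for every $1\le m\le p-2$. For $1\le j\le n-1$, both $j$ and $n-j$ lie in $[1,p-2]$. Hence both factors $S_{n-j}(p-1)$ and $S_j(p-1)$ are divisible by $p$, and their product is divisible by $p^2$. All these interior terms therefore vanish modulo $p^2$. The binomial coefficient and the power of $\ii$ do not matter here, since $\ZZ[\ii]$-multiples of $p^2$ stay in $p^2\ZZ[\ii]$.

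Only the endpoints $j=0$ and $j=n$ remain. They contribute
\[
S_n(p-1)S_0(p-1)+\ii^{\,n}S_0(p-1)S_n(p-1).
\]
Using $S_0(p-1)=\sum_{b=1}^{p-1}b^0=p-1$, this equals exactly $(p-1)S_n(p-1)(1+\ii^{\,n})$, which is the claimed right-hand side.

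There is no genuine obstacle. The one point needing care is the range check: the hypothesis $n\le p-2$ ensures that no interior index reaches $p-1$, and $n\ge1$ makes the two endpoint terms distinct. The convention $0^0$ never arises, because the sums start at $a=1$.
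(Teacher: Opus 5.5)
Your proposal is correct and is essentially the paper's proof: you apply the binomial factorization with $k=m=p-1$, use Lemma~\ref{lem:powersum_modp} to see that every interior term $1\le j\le n-1$ is a product of two multiples of $p$, and then collect the endpoint terms $j=0,n$ with $S_0(p-1)=p-1$. Your range check (interior indices in $[1,p-2]$) is slightly coarser than the paper's $[1,p-3]$ but equally sufficient.
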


\begin{proof}
Start from Lemma~\ref{lem:binomfactor} with \(k=m=p-1\):
\[
\G_n(p)=\sum_{j=0}^{n}\binom{n}{j}\ii^{\,j}\,S_{n-j}(p-1)\,S_j(p-1).
\]
For \(1\le j\le n-1\), we have \(1\le j\le p-3\) and \(1\le n-j\le p-3\), hence \((p-1)\nmid j\) and \((p-1)\nmid(n-j)\).
By Lemma~\ref{lem:powersum_modp}, both \(S_j(p-1)\) and \(S_{n-j}(p-1)\) are divisible by \(p\), so their product
is divisible by \(p^2\). Hence all terms with \(1\le j\le n-1\) vanish modulo \(p^2\).

Thus, modulo \(p^2\) only the endpoints \(j=0\) and \(j=n\) remain:

\[
\G_n(p)\equiv \binom{n}{0}\ii^0 S_n(p-1)S_0(p-1)+\binom{n}{n}\ii^n S_0(p-1)S_n(p-1)\pmod{p^2}.
\]
Since \(S_0(p-1)=p-1\), this is exactly the claimed formula.
\end{proof}

\begin{corollary}[Even exponents and Bernoulli numbers]\label{cor:evenBn}
Let \(p\) be an odd prime and let \(n\) be even with \(2\le n\le p-3\).
Then, writing \(B_n\) for the Bernoulli number,
\[
\G_n(p)\equiv p(p-1)(1+\ii^{\,n})B_n\pmod{p^2}.
\]
In particular,
\[
n\equiv 2 \ (\mathrm{mod}\ 4)\ \Longrightarrow\ \G_n(p)\equiv 0\pmod{p^2},
\qquad
n\equiv 0 \ (\mathrm{mod}\ 4)\ \Longrightarrow\ \G_n(p)\equiv 2p(p-1)B_n\pmod{p^2}.
\]
\end{corollary}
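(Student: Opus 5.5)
Starting from Theorem~\ref{thm:modp2_endpoint}, for even $n$ with $2\le n\le p-3$ we have
\[
\G_n(p)\equiv (p-1)(1+\ii^{\,n})S_n(p-1)\pmod{p^2},
\]
so it suffices to determine $S_n(p-1)$ modulo $p^2$; the factor $(1+\ii^n)$ is then treated separately in the two residue classes of $n$ modulo $4$. To compute $S_n(p-1)$ I will use Faulhaber's formula in the form
\[
S_n(p-1)=\sum_{\ell=1}^{n+1}\frac{p^\ell}{\ell}\binom{n}{\ell-1}B_{n-\ell+1}.
\]
This identity follows from $S_n(N)=\frac{B_{n+1}(N+1)-B_{n+1}}{n+1}$ with $N=p-1$, after expanding $B_{n+1}(p)=\sum_k\binom{n+1}{k}B_{n+1-k}p^k$ and using $\frac{1}{n+1}\binom{n+1}{\ell}=\frac{1}{\ell}\binom{n}{\ell-1}$. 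The $\ell=1$ term is exactly $pB_n$.

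The key step is to show that every term with $\ell\ge2$ is divisible by $p^2$. Since $n\le p-3$, we have $\ell\le n+1\le p-2<p$, so $1/\ell$ is a $p$-adic unit. The indices satisfy $n-\ell+1\le n-1\le p-4$, hence $p-1\nmid n-\ell+1$ unless that index is $0$. By von Staudt--Clausen, each $B_{n-\ell+1}$ is therefore $p$-integral; the possible exception $B_0=1$ is integral anyway, and so is $B_1=-\tfrac12$ because $p$ is odd. Consequently each term with $\ell\ge 2$ has valuation at least $\ell\ge2$, and we obtain
\[
S_n(p-1)\equiv pB_n\pmod{p^2},
\]
with $B_n$ itself $p$-integral by the same argument, since $p-1\nmid n$. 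Substituting gives $\G_n(p)\equiv p(p-1)(1+\ii^n)B_n\pmod{p^2}$.

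For the two particular cases, if $n\equiv2\pmod4$ then $\ii^n=-1$, the factor $1+\ii^n$ vanishes, and $\G_n(p)\equiv0\pmod{p^2}$. If $n\equiv0\pmod4$ then $1+\ii^n=2$, which yields $\G_n(p)\equiv 2p(p-1)B_n\pmod{p^2}$.

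The only point needing care is the $p$-integrality check: we must confirm both that $\ell<p$, so that no denominator $\ell$ is divisible by $p$, and that none of the Bernoulli indices involved is a positive multiple of $p-1$. Both hold precisely because of the hypothesis $n\le p-3$.
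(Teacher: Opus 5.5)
Your proposal is correct and follows essentially the same route as the paper: expand $S_n(p-1)$ by Faulhaber's formula, use von Staudt--Clausen (together with $\ell<p$) to see that all terms with $\ell\ge2$ vanish modulo $p^2$, and substitute $S_n(p-1)\equiv pB_n \pmod{p^2}$ into Theorem~\ref{thm:modp2_endpoint}. Your explicit checks that $1/\ell$ is a $p$-adic unit and that $B_0$ and $B_1$ are $p$-integral are slightly more careful than the paper's own write-up.
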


\begin{proof}
Let $p$ be an odd prime and let $n$ be even with $2\le n\le p-3$.
Faulhaber's formula in Bernoulli-polynomial form gives
\[
S_n(p-1)=\sum_{a=1}^{p-1}a^n=\frac{B_{n+1}(p)-B_{n+1}}{n+1}.
\]
Since $n+1$ is odd and $>1$, we have $B_{n+1}=0$. Expanding $B_{n+1}(p)$ yields
\[
B_{n+1}(p)=\sum_{k=0}^{n+1}\binom{n+1}{k}B_{n+1-k}\,p^k,
\]
hence
\[
S_n(p-1)=\sum_{k=1}^{n+1}\binom{n}{k-1}\frac{p^k}{k}\,B_{n+1-k}.
\]
For $k\ge 2$ the factor $p^k$ contributes $p^2$. Moreover, for each index $n+1-k\le n-1\le p-4$,
von Staudt--Clausen implies $p\nmid \den(B_{n+1-k})$ (since $(p-1)\nmid (n+1-k)$ and if $k=n+1$ then $B_0=1$), so these terms
are indeed $0\pmod{p^2}$ in $\ZZ_{(p)}$. Therefore only $k=1$ survives modulo $p^2$, giving
\[
S_n(p-1)\equiv pB_n \pmod{p^2}.
\]
Substitute this into Theorem~\ref{thm:modp2_endpoint}.
\end{proof}

\begin{proposition}[The first three exponents]\label{prop:G123}
For every odd prime $p$ one has
\[
\G_1(p)=\frac{p(p-1)^2}{2}(1+\ii),
\]
\[
\G_2(p)=\frac{p^2(p-1)^2}{2}\,\ii,
\]
and
\[
\G_3(p)=-\frac{p^3(p-1)^2}{4}(1-\ii).
\]
In particular,
\[
v_p(\G_1(p))=1,\qquad
v_p(\G_2(p))=2,\qquad
v_p(\G_3(p))=3.
\]
\end{proposition}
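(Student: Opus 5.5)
The plan is a direct computation via Lemma~\ref{lem:binomfactor} with $k=m=p-1$, using the closed forms
\[
S_0(p-1)=p-1,\qquad S_1(p-1)=\frac{p(p-1)}{2},\qquad S_2(p-1)=\frac{(p-1)p(2p-1)}{6},\qquad S_3(p-1)=\frac{p^2(p-1)^2}{4}.
\]
These are just Faulhaber's formulas evaluated at $N=p-1$. For each $n\in\{1,2,3\}$, expand
\[
\G_n(p)=\sum_{j=0}^n\binom nj \ii^j S_{n-j}(p-1)S_j(p-1)
\]
and simplify. No congruences are needed, since the identities are exact polynomial identities in $p$ (valid in fact for any $N$ in place of $p$).

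For $n=1$ there are two terms: $S_1S_0(1+\ii)=\tfrac{p(p-1)^2}{2}(1+\ii)$.

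For $n=2$ we get $S_2S_0(1+\ii^2)+2\ii S_1^2$. The endpoint terms cancel since $1+\ii^2=0$, leaving
\[
2\ii\cdot\tfrac{p^2(p-1)^2}{4}=\tfrac{p^2(p-1)^2}{2}\ii.
\]

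For $n=3$ the expression is
\[
S_3S_0(1+\ii^3)+3S_2S_1(\ii+\ii^2)=(1-\ii)S_3S_0-3(1-\ii)S_2S_1.
\]
This uses $1+\ii^3=1-\ii$ and $\ii+\ii^2=-(1-\ii)$, consistent with Corollary~\ref{cor:location}. It then remains to compute $S_3S_0-3S_1S_2$. First,
\[
S_3S_0=\frac{p^2(p-1)^3}{4}.
\]
Second,
\[
3S_1S_2=3\cdot\frac{p(p-1)}{2}\cdot\frac{p(p-1)(2p-1)}{6}=\frac{p^2(p-1)^2(2p-1)}{4}.
\]
Their difference is
\[
\frac{p^2(p-1)^2}{4}\bigl((p-1)-(2p-1)\bigr)=-\frac{p^3(p-1)^2}{4},
\]
which gives the stated formula. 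This algebraic simplification is the only step where care is needed, so I will check it explicitly, for instance by testing $p=3$.

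The valuations then follow because $p$ is odd. The prefactors $(p-1)^2/2$ and $(p-1)^2/4$ are $p$-adic units, so the real or imaginary parts are exactly $p^k$ times a unit for $k=1,2,3$. Hence $v_p(\G_k(p))=k$.
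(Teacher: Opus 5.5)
Your proposal is correct and follows the paper's proof: you apply Lemma~\ref{lem:binomfactor} with the closed forms for $S_0,\dots,S_3$ and simplify each case exactly, including the same key step $S_3S_0-3S_1S_2=-\tfrac{p^3(p-1)^2}{4}$. You then read off the valuations from the fact that $(p-1)^2/2$ and $(p-1)^2/4$ are $p$-adic units; your $p=3$ sanity check gives $-27+27\ii$, which agrees with the value the paper records.
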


\begin{proof}
Write
\[
S_r=S_r(p-1)=\sum_{a=1}^{p-1}a^r.
\]
We shall use
\[
S_0=p-1,\qquad
S_1=\frac{p(p-1)}{2},\qquad
S_2=\frac{p(p-1)(2p-1)}{6},\qquad
S_3=\left(\frac{p(p-1)}{2}\right)^2.
\]

For $n=1$,
\[
\G_1(p)=\sum_{a,b=1}^{p-1}(a+b\ii)
=S_1S_0+\ii S_0S_1
=S_0S_1(1+\ii),
\]
and therefore
\[
\G_1(p)=\frac{p(p-1)^2}{2}(1+\ii).
\]

For $n=2$,
\[
(a+b\ii)^2=a^2-b^2+2ab\ii.
\]
Hence
\[
\G_2(p)=S_2S_0-S_0S_2+2\ii S_1^2
=2\ii S_1^2
=\frac{p^2(p-1)^2}{2}\,\ii.
\]

For $n=3$,
\[
(a+b\ii)^3=a^3+3a^2b\ii-3ab^2-b^3\ii.
\]
Thus
\[
\G_3(p)=S_3S_0-3S_1S_2
+\ii(3S_2S_1-S_0S_3).
\]
Substituting the above formulas for $S_0,S_1,S_2,S_3$, we get
\[
S_3S_0-3S_1S_2
=
\frac{p^2(p-1)^3}{4}
-\frac{p^2(p-1)^2(2p-1)}{4}
=
-\frac{p^3(p-1)^2}{4}.
\]
Similarly,
\[
3S_2S_1-S_0S_3
=
\frac{p^3(p-1)^2}{4}.
\]
Therefore
\[
\G_3(p)
=
-\frac{p^3(p-1)^2}{4}
+\frac{p^3(p-1)^2}{4}\ii
=
-\frac{p^3(p-1)^2}{4}(1-\ii).
\]

Since $p$ is odd, the rational factors $1/2$ and $1/4$ are $p$-adic
units. The displayed formulas immediately imply
\[
v_p(\G_1(p))=1,\qquad
v_p(\G_2(p))=2,\qquad
v_p(\G_3(p))=3.
\]
\end{proof}

\subsection{The small-exponent valuation pattern and irregular pairs}
\begin{lemma}[A universal $p$-adic generating-function expansion]
\label{lem:master-p-expansion}
Put
\[
Q(z):=\frac{1}{e^z-1},
\qquad
C_\ell(z):=\frac{z^\ell}{\ell!}Q(z)
\qquad (\ell\ge1).
\]
Although $Q(z)$ is a Laurent series,
\[
Q(z)=z^{-1}-\frac12+O(z),
\]
one has
\[
C_\ell(z)\in z^{\ell-1}\QQ[[z]],
\]
so all coefficient extractions below are well defined in the ring of
formal power series.
For $\ell\ge1$, define
\[
D_\ell(z):=
-C_\ell(z)-C_\ell(\ii z)
+\sum_{a=1}^{\ell-1}C_a(z)C_{\ell-a}(\ii z).
\]
Then
\[
\sum_{n\ge0}\G_n(p)\frac{z^n}{n!}
=
1+\sum_{\ell\ge1}p^\ell D_\ell(z).
\]
Consequently, if $1\le n\le p-2$ and $M\ge1$, then
\[
\G_n(p)
\equiv
n!\sum_{\ell=1}^{M}p^\ell[z^n]D_\ell(z)
\pmod{p^{M+1}}
\]
in $\ZZ_{(p)}[\ii]$.
\end{lemma}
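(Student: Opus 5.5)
The plan is to write the exponential generating function of $\G_n(p)$ as a product of two one-dimensional generating functions, and then expand each factor in powers of $p$.

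\textbf{Step 1: factor the generating function.} By Lemma~\ref{lem:binomfactor},
\[
\sum_{n\ge0}\G_n(p)\frac{z^n}{n!}=\Bigl(\sum_{a=1}^{p-1}e^{az}\Bigr)\Bigl(\sum_{b=1}^{p-1}e^{\ii bz}\Bigr),
\]
since $(a+b\ii)^n z^n/n!$ sums over $n$ to $e^{az}e^{\ii bz}$.

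\textbf{Step 2: expand one factor in powers of $p$.} Summing the geometric series,
\[
\sum_{a=1}^{p-1}e^{az}=\frac{e^{pz}-e^{z}}{e^{z}-1}=\frac{e^{pz}-1}{e^z-1}-1 .
\]
Expanding $e^{pz}-1=\sum_{\ell\ge1}p^\ell z^\ell/\ell!$ gives
\[
\sum_{a=1}^{p-1}e^{az}=-1+\sum_{\ell\ge1}p^\ell C_\ell(z).
\]
This is an identity in $\QQ[[z]]$ for formal $p$, because $C_\ell(z)=z^{\ell-1}\cdot\bigl(z/(e^z-1)\bigr)/\ell!$. That expression also proves the claim $C_\ell\in z^{\ell-1}\QQ[[z]]$, since $z/(e^z-1)=\sum B_k z^k/k!$ is a power series. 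Replacing $z$ by $\ii z$ gives the analogous expansion for the second factor, with coefficients in $\QQ(\ii)$.

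\textbf{Step 3: multiply and collect.} The product is
\[
\bigl(-1+\textstyle\sum p^\ell C_\ell(z)\bigr)\bigl(-1+\sum p^\ell C_\ell(\ii z)\bigr).
\]
Collecting the coefficient of $p^\ell$ gives $1$ at $\ell=0$, and for $\ell\ge1$ it gives $-C_\ell(z)-C_\ell(\ii z)+\sum_{a=1}^{\ell-1}C_a(z)C_{\ell-a}(\ii z)=D_\ell(z)$. The rearrangement is legitimate coefficientwise in $z$: a fixed coefficient $[z^n]$ receives contributions only from $\ell\le n+2$, because $D_\ell\in z^{\ell-2}\QQ(\ii)[[z]]$. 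One should treat $p$ formally (or note each $[z^n]$ is a finite sum) to justify this.

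\textbf{Step 4: truncate modulo $p^{M+1}$.} Extracting $n![z^n]$ gives the exact identity
\[
\G_n(p)=\sum_{\ell=1}^{n+2}p^\ell\, n![z^n]D_\ell(z).
\]
For the congruence it remains to check that the terms with $\ell>M$ are $\equiv0\pmod{p^{M+1}}$ in $\ZZ_{(p)}[\ii]$, i.e.\ that $n![z^n]D_\ell$ is $p$-integral for $n\le p-2$. I expect this step to be the main (though mild) obstacle, and I would handle it as follows.

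\begin{itemize}
\item[(a)] Write $[z^k]C_a(z)=B_{k-a+1}/\bigl(a!\,(k-a+1)!\bigr)$.
\item[(b)] For $k\le n\le p-2$, every index $m=k-a+1$ satisfies $0\le m\le p-2$. So $p-1\nmid m$ unless $m=0$, and von Staudt--Clausen makes each such $B_m$ $p$-integral.
\item[(c)] The factorials $a!$ and $(k-a+1)!$ are all at most $(p-1)!$, hence $p$-adic units, and $n!$ is a unit too.
\item[(d)] For the products $C_a(z)C_{\ell-a}(\ii z)$, the coefficient of $z^n$ is a finite convolution of such coefficients, all of degree at most $n$, so it is $p$-integral as well.
\end{itemize}

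Therefore each $n![z^n]D_\ell$ lies in $\ZZ_{(p)}[\ii]$, so all terms with $\ell\ge M+1$ vanish modulo $p^{M+1}$, which yields the stated congruence.
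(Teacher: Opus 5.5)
Your proposal is correct and follows the paper's proof. Like the paper, you factor the generating function as the product of the two one-variable power-sum series, expand $(e^{pz}-e^z)/(e^z-1)=-1+\sum_{\ell\ge1}p^\ell C_\ell(z)$, multiply, and invoke von Staudt--Clausen for $p$-integrality when $n\le p-2$; your explicit check that the factorial denominators $a!$ and $(k-a+1)!$ (with $a\le n+1\le p-1$) are $p$-adic units fills in a detail the paper leaves implicit.
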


\begin{proof}
We have
\[
\sum_{m\ge0}S_m(p-1)\frac{z^m}{m!}
=
\sum_{a=1}^{p-1}e^{az}
=
\frac{e^{pz}-e^z}{e^z-1}.
\]
Since
\[
e^{pz}=1+\sum_{\ell\ge1}\frac{p^\ell z^\ell}{\ell!},
\]
it follows that
\[
\frac{e^{pz}-e^z}{e^z-1}
=
-1+\sum_{\ell\ge1}p^\ell C_\ell(z).
\]
Therefore
\begin{align*}
\sum_{n\ge0}\G_n(p)\frac{z^n}{n!}
&=
\left(-1+\sum_{\ell\ge1}p^\ell C_\ell(z)\right)
\left(-1+\sum_{\ell\ge1}p^\ell C_\ell(\ii z)\right)\\
&=
1+\sum_{\ell\ge1}p^\ell D_\ell(z).
\end{align*}
This proves the exact formal identity.

For fixed $n\le p-2$, the coefficient $[z^n]D_\ell(z)$ is a finite
$\QQ(\ii)$-linear combination of products of Bernoulli numbers whose
indices are at most $n$. Hence every Bernoulli index occurring is
strictly smaller than $p-1$.
Hence their denominators are prime to $p$ by von Staudt--Clausen.
Thus the terms with $\ell\ge M+1$ are divisible by $p^{M+1}$ in
$\ZZ_{(p)}[\ii]$, proving the congruence.
\end{proof}

\begin{lemma}[The first nonzero coefficients in each class modulo $4$]
\label{lem:D-coefficients}
Let $t\ge1$. Then
\[
[z^{4t}]D_1(z)=-\frac{2B_{4t}}{(4t)!},
\]
\[
[z^{4t+1}]D_1(z)=0,
\qquad
[z^{4t+1}]D_2(z)
=
-\frac{(4t+1)(1+\ii)B_{4t}}{(4t+1)!},
\]
\[
[z^{4t+2}]D_1(z)=[z^{4t+2}]D_2(z)=0,
\]
\[
[z^{4t+2}]D_3(z)
=
-\frac{(4t+2)(4t+1)\ii B_{4t}}
       {2(4t+2)!},
\]
and
\[
[z^{4t+3}]D_1(z)
=
[z^{4t+3}]D_2(z)
=
[z^{4t+3}]D_3(z)
=
0,
\]
while
\[
[z^{4t+3}]D_4(z)
=
\frac{(4t+3)(4t+2)(4t+1)(1-\ii)B_{4t}}
     {12(4t+3)!}.
\]
\end{lemma}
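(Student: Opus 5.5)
The plan is to compute everything from the product structure of $D_\ell$ and the expansion $C_\ell(z)=\frac{z^\ell}{\ell!}Q(z)$. Write
\[
Q(z)=\frac1z\sum_{k\ge0}\frac{B_k}{k!}z^k,\qquad C_\ell(z)=\frac{1}{\ell!}\sum_{k\ge0}\frac{B_k}{k!}z^{k+\ell-1}.
\]
Hence $[z^n]C_\ell(z)=\frac{B_{n-\ell+1}}{\ell!\,(n-\ell+1)!}$ and $[z^n]C_\ell(\ii z)=\ii^n[z^n]C_\ell(z)$. For products we get
\[
[z^n]\,C_a(z)C_b(\ii z)=\frac{1}{a!\,b!}\sum_{k+m=n-a-b+2}\frac{B_kB_m}{k!\,m!}\,\ii^{\,m+b-1}.
\]

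First I check $D_1$. Here $D_1=-C_1(z)-C_1(\ii z)$, so $[z^n]D_1=-(1+\ii^n)B_n/n!$. Since $n\ge4$, this gives $-2B_{4t}/(4t)!$ when $n=4t$. It vanishes for $n=4t+2$ because $1+\ii^n=0$. It also vanishes for odd $n\ge5$ because $B_n=0$.

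Next I handle $D_\ell$ for $\ell\ge2$ by splitting it into two pieces. The linear part $-C_\ell(z)-C_\ell(\ii z)$ contributes $-(1+\ii^n)B_{n-\ell+1}/(\ell!(n-\ell+1)!)$. The product part $\sum_{a}C_aC_{\ell-a}(\ii\cdot)$ only involves $B_kB_m$ with $k+m=n-\ell+2$.

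For $\ell=2$, $n=4t+1$: the linear part gives $-(1+\ii)B_{4t}/(2(4t)!)$. In the product $C_1(z)C_1(\ii z)$ we need $k+m=4t+1$, which is odd, so one index is odd. The only surviving terms are therefore $k$ or $m$ equal to $1$ paired with $B_{4t}$, since $B_0B_{4t+1}=0$. These give $B_1B_{4t}/(4t)!\,(\ii^{4t}+\ii)=-\frac12(1+\ii)B_{4t}/(4t)!$. The total is $-(1+\ii)B_{4t}/(4t)!$, which matches the claimed $-(4t+1)(1+\ii)B_{4t}/(4t+1)!$.

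The same bookkeeping applies to the remaining cases, with one parity principle doing most of the work. A product term $B_kB_m$ with $k+m$ odd survives only if $\{k,m\}$ contains $1$. When $k+m$ is even, both indices are even or both are $1$.

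For $[z^{4t+1}]D_1$ and the vanishing statements at $n=4t+2$ and $n=4t+3$, I expand each lower $D_\ell$ as above. Then I verify that the powers of $\ii$ cancel in pairs: for example $\ii^{m+b-1}$ summed against its partner with $k\leftrightarrow m$ and $a\leftrightarrow b$. This is just the square symmetry of Lemma~\ref{lem:squaresym} applied to each $D_\ell$, since $D_\ell(z)$ inherits $D_\ell(\ii\bar z)$-type symmetry. I would organize it so: note that $\sum_\ell p^\ell D_\ell$ is symmetric under swapping the two factors. That forces $[z^n]D_\ell\in \ii^{n}\overline{(\cdot)}$-eigenspace consistent with Corollary~\ref{cor:location}, and leaves only the real/imaginary scalar to check.

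The main obstacle is the nonvanishing terms $[z^{4t+2}]D_3$ and $[z^{4t+3}]D_4$. In these, several products such as $C_1C_2$, $C_2C_1$ and $C_1C_3,\dots$ contribute full convolutions $\sum B_kB_m$ with both indices even and large. These do not vanish individually. Showing that they cancel requires more than parity: I expect the even–even convolution sums to cancel between $a$ and $\ell-a$ because of the factor $\ii^{m+b-1}$ together with $m\equiv0$ or $2\pmod4$ patterns. I will pair the term $(a,k,m)$ with $(\ell-a,m,k)$ and check that the phases $\ii^{m+\ell-a-1}$ and $\ii^{k+a-1}$ sum to zero when $k+m\equiv n-\ell+2\pmod 4$.

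If that pairing fails, I would fall back on an identity. One route is Euler's convolution $\sum\binom{N}{k}B_kB_{N-k}=-(N-1)B_N-NB_{N-1}$. Alternatively, I can use directly the closed identity $Q(z)Q(\ii z)$ in terms of $Q(z)$, $Q(\ii z)$ and their derivatives, obtained by differentiating $Q'=-Q-Q^2$. After the cancellation, only the terms with a factor $B_1=-\tfrac12$ or $B_0$ remain, each multiplying $B_{4t}$. Summing these rational coefficients yields $-\ii/2\cdot\frac{1}{(4t)!}$ and $\frac{1-\ii}{12}\cdot\frac1{(4t)!}$, as stated.
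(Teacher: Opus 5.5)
Your coefficient formula $[z^n]C_a(z)C_b(\ii z)=\frac{1}{a!\,b!}\sum_{k+m=n-a-b+2}\frac{B_kB_m}{k!\,m!}\ii^{m+b-1}$ and your parity principle are correct. Your cases $D_1$ and $[z^{4t+1}]D_2$ are complete, and this is the same route as the paper, which rewrites $D_1,\dots,D_4$ through $Q(z)$ and $Q(\ii z)$. The gap is that you misdiagnose where the remaining work lies. Every product term of $[z^n]D_\ell$ has $k+m=n-\ell+2$. For the two coefficients you flag as the main obstacle, $[z^{4t+2}]D_3$ and $[z^{4t+3}]D_4$, this equals $4t+1$, which is odd. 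By your own parity principle only $\{k,m\}=\{1,4t\}$ survives; the pair $\{0,4t+1\}$ dies because $B_{4t+1}=0$. So no even--even convolution occurs, there is nothing to cancel, and Euler's identity is not needed. What is actually missing is the finite computation, which you only assert. In units of $B_{4t}/(4t)!$:
\begin{itemize}
\item For $D_3$, the linear part vanishes because $1+\ii^{4t+2}=0$. The products $C_1(z)C_2(\ii z)$ and $C_2(z)C_1(\ii z)$ give $\tfrac{1-\ii}{4}$ and $-\tfrac{1+\ii}{4}$, for a total of $-\tfrac{\ii}{2}$.
\item For $D_4$, the linear part, $C_1C_3$, $C_2C_2$ and $C_3C_1$ give $-\tfrac{1-\ii}{24}+\tfrac{1+\ii}{12}+\tfrac{1-\ii}{8}-\tfrac{1+\ii}{12}=\tfrac{1-\ii}{12}$.
\end{itemize}
Equivalently, the only mixed coefficient needed for the nonzero values of $D_2,D_3,D_4$ is $(4t)!\,[z^{4t-1}]Q(z)Q(\ii z)=-\tfrac{1-\ii}{2}B_{4t}$, which is again a two-term sum. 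Incidentally, the auxiliary values of $[z^N]Q(z)Q(\ii z)$ displayed in the paper's proof do not agree with this direct count. For instance, $[z^{4t}]Q(z)Q(\ii z)=0$ by the cancellation described below. So this two-term evaluation is also the cleanest way to justify the paper's own decomposition.

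Even--even convolutions do occur, but only in the vanishing statements $[z^{4t+2}]D_2$ and $[z^{4t+3}]D_3$, where $k+m=4t+2$. There your proposed pairing $(a,k,m)\leftrightarrow(\ell-a,m,k)$ fails as soon as $\ell=3$. The two phases are $\ii^{m+1}$ and $\ii^{k}$, and for $k,m$ even with $k+m\equiv 2\pmod 4$ their sum is $\pm(1-\ii)\neq 0$. The pairing that works keeps $a$ fixed and swaps $k\leftrightarrow m$. The prefactors are symmetric, and the phase ratio is $\ii^{m-k}=-1$ because $m-k\equiv 2\pmod 4$; note $k\neq m$, since $2t+1$ is odd. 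Moreover, $[z^{4t+3}]D_2=0$ is not a pairwise cancellation of phases at all. The linear part $-\frac{(1-\ii)B_{4t+2}}{2(4t+2)!}$ cancels the two $B_1B_{4t+2}$ terms of $C_1(z)C_1(\ii z)$, which sum to $+\frac{(1-\ii)B_{4t+2}}{2(4t+2)!}$. Your symmetry argument cannot replace these checks. It only places $[z^n]D_\ell$ on a fixed real line, here $(1-\ii)\RR$ for $n\equiv 3\pmod 4$, and the zero value $[z^{4t+3}]D_3$ and the nonzero value $[z^{4t+3}]D_4$ both lie on it. The Riccati fallback does not apply either: $Q'=-Q-Q^2$ controls $Q(z)^2$, not the mixed product $Q(z)Q(\ii z)$.
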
\begin{proof}
Write
\[
Q=Q(z)=\frac1{e^z-1},
\qquad
Q_{\ii}=Q(\ii z)=\frac1{e^{\ii z}-1}.
\]
From the definition one obtains
\[
D_1(z)=-z(Q+\ii Q_{\ii}),
\]
\[
D_2(z)
=
-\frac{z^2}{2}(Q-Q_{\ii})
+\ii z^2QQ_{\ii},
\]
\[
D_3(z)
=
-\frac{z^3}{6}(Q-\ii Q_{\ii})
+\frac{-1+\ii}{2}z^3QQ_{\ii},
\]
and
\[
D_4(z)
=
-\frac{z^4}{24}(Q+Q_{\ii})
-\frac{z^4}{4}QQ_{\ii}.
\]
Now use
\[
Q(z)=\sum_{m\ge0}B_m\frac{z^{m-1}}{m!},
\qquad
Q(\ii z)=
\sum_{m\ge0}B_m\ii^{m-1}\frac{z^{m-1}}{m!},
\]
together with $B_m=0$ for odd $m>1$.

For later use, write
\[
[z^{N-2}]QQ_{\ii}
=
\sum_{a=0}^{N}
\frac{B_aB_{N-a}}{a!(N-a)!}\ii^{\,N-a-1}.
\]
Because $B_m=0$ for odd $m>1$, only the indices
$a\in\{0,1,N-1,N\}$ and the even indices contribute.  For the values
of $N$ occurring below, the factors $\ii^{\,N-a-1}$ depend only on
$a\bmod4$. Separating the even indices into the two classes modulo $4$
and using
\[
\sum_{a=0}^{N}\binom NaB_aB_{N-a}
=-(N-1)B_N
\qquad(N\ge4,\ N\text{ even}),
\]
one obtains
\[
(4t)![z^{4t-1}]QQ_{\ii}
=-2t(1+\ii)B_{4t},
\]
\[
(4t)![z^{4t}]QQ_{\ii}
=-\frac{4t+1}{2}\ii B_{4t},
\]
and
\[
(4t)![z^{4t+1}]QQ_{\ii}
=-\frac{(4t+1)(4t+2)}{6}(1-\ii)B_{4t}.
\]
Combining these identities with the contributions from the linear
terms involving $Q$ and $Q_{\ii}$ gives the asserted coefficients of
$D_1,D_2,D_3,D_4$.
\end{proof}

\begin{theorem}[Uniform small-exponent expansion]
\label{thm:small-n-bernoulli}
Let $p$ be an odd prime, let $t\ge1$, and let
$r\in\{0,1,2,3\}$ satisfy
\[
4t+r\le p-2.
\]
Put
\[
\kappa_0=-2,\qquad
\kappa_1=-(1+\ii),\qquad
\kappa_2=-\frac{\ii}{2},\qquad
\kappa_3=\frac{1-\ii}{12},
\]
and write
\[
(n)_0:=1,\qquad
(n)_r:=n(n-1)\cdots(n-r+1)
\quad(r\ge1).
\]
Then
\begin{equation}\label{eq:uniform-small-expansion}
\G_{4t+r}(p)
\equiv
\kappa_r(4t+r)_r\,p^{r+1}B_{4t}
\pmod{p^{r+2}}.
\end{equation}

Consequently,
\[
v_p(\G_{4t+r}(p))\ge r+1.
\]
Moreover,
\[
v_p(\G_{4t+r}(p))=r+1
\quad\Longleftrightarrow\quad
p\nmid\operatorname{num}(B_{4t}),
\]
and
\[
p\mid\operatorname{num}(B_{4t})
\quad\Longleftrightarrow\quad
v_p(\G_{4t+r}(p))\ge r+2.
\]
Thus an irregular pair $(p,4t)$ produces an exceptional block whose
four valuations are at least
\[
(2,3,4,5).
\]
\end{theorem}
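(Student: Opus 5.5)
The plan is to read off the congruence directly from Lemma~\ref{lem:master-p-expansion} with $M=r+1$, using the vanishing and leading coefficients supplied by Lemma~\ref{lem:D-coefficients}. Put $n=4t+r\le p-2$. Lemma~\ref{lem:master-p-expansion} gives
\[
\G_n(p)\equiv n!\sum_{\ell=1}^{r+1}p^\ell[z^n]D_\ell(z)\pmod{p^{r+2}}
\]
in $\ZZ_{(p)}[\ii]$. Lemma~\ref{lem:D-coefficients} says that for $\ell\le r$ the coefficient $[z^{4t+r}]D_\ell(z)$ vanishes. Explicitly, for $r=1$ it vanishes at $\ell=1$; for $r=2$ at $\ell=1,2$; for $r=3$ at $\ell=1,2,3$; and for $r=0$ the sum is empty. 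So only the term $\ell=r+1$ survives:
\[
\G_n(p)\equiv p^{r+1}\,n!\,[z^n]D_{r+1}(z)\pmod{p^{r+2}}.
\]

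Next I substitute the value of $[z^n]D_{r+1}(z)$ from Lemma~\ref{lem:D-coefficients}. In each case it has the form $c_r\,(n)_r\,B_{4t}/n!$:
\[
c_0=-2,\qquad c_1=-(1+\ii),\qquad c_2=-\tfrac{\ii}{2},\qquad c_3=\tfrac{1-\ii}{12}.
\]
For $r=0$ the factor $(n)_0=1$ is implicit. Multiplying by $n!$ gives exactly $\kappa_r(4t+r)_r\,p^{r+1}B_{4t}$, which proves \eqref{eq:uniform-small-expansion}.

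For the consequences, I check that everything in the leading term is $p$-integral and that the prefactor is a $p$-adic unit. Since $4t\le p-2$, we have $(p-1)\nmid 4t$, so von Staudt--Clausen gives $B_{4t}\in\ZZ_{(p)}$. The factor $(4t+r)_r$ is a product of integers in $[1,p-2]$, hence prime to $p$. The constants $2$, $1\pm\ii$ (of norm $2$) and $12$ are units at $p$ because $p\ge 4t+r+2\ge 6>3$. Hence $\kappa_r(4t+r)_r$ is a unit in $\ZZ_{(p)}[\ii]$. It follows that $v_p(\G_{4t+r}(p))\ge r+1$, with equality iff $p\nmid B_{4t}$ in $\ZZ_{(p)}$, i.e.\ iff $p\nmid\operatorname{num}(B_{4t})$.

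Here $v_p(x+y\ii)=\min(v_p(x),v_p(y))$. A unit multiple $u\,p^{r+1}\beta$ with $\beta\in\ZZ_{(p)}$ has valuation $r+1+v_p(\beta)$, because multiplication by a unit of $\ZZ_{(p)}[\ii]$ preserves divisibility by powers of $p$ in that ring. This justifies the equivalence. Its converse, $p\mid\operatorname{num}(B_{4t})$ iff $v_p\ge r+2$, is the same statement read the other way. The irregular-pair remark follows by letting $r$ range over $0,1,2,3$, which is allowed as long as $4t+3\le p-2$.

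The only real weight lies in Lemma~\ref{lem:D-coefficients}, which is assumed. The step needing care is the unit check. In particular, the hypothesis $4t+r\le p-2$ must be used to guarantee that $p>3$ and that no factor of $(4t+r)_r$ is divisible by $p$.
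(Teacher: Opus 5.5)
Your proposal is correct and follows the paper's proof essentially step for step: apply Lemma~\ref{lem:master-p-expansion} with $M=r+1$, use Lemma~\ref{lem:D-coefficients} to kill the terms $\ell\le r$ and read off the $\ell=r+1$ coefficient, then use von Staudt--Clausen together with $4t+r\le p-2$ (which forces $p\ge7$) to see that $\kappa_r(4t+r)_r$ is a $p$-adic unit. Your caveat that the full block $(2,3,4,5)$ requires $4t+3\le p-2$ is a fair sharpening of the statement, since when $4t=p-3$ only $r=0,1$ are in range.
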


\begin{proof}
Set $n=4t+r$. By Lemma~\ref{lem:master-p-expansion},
\[
\G_n(p)
\equiv
n!\sum_{\ell=1}^{r+1}
p^\ell[z^n]D_\ell(z)
\pmod{p^{r+2}}.
\]
Lemma~\ref{lem:D-coefficients} shows that
\[
[z^n]D_\ell(z)=0
\qquad(1\le\ell\le r),
\]
while
\[
n![z^n]D_{r+1}(z)
=
\kappa_r(n)_rB_{4t}.
\]
This proves \eqref{eq:uniform-small-expansion}.

Since $4t<p-1$, the von Staudt--Clausen theorem implies
\[
p\nmid\den(B_{4t}).
\]
Also, under the hypothesis $4t+r\le p-2$, every factor in
$(4t+r)_r$ is a positive integer smaller than $p$. The denominators
of the constants $\kappa_r$ are $p$-adic units whenever the
corresponding range is nonempty. Therefore
\[
\kappa_r(4t+r)_r
\]
is a $p$-adic unit, and the valuation assertions follow.
\end{proof}

\subsection{A general depth-$3$ criterion and multiples of $p-1$}

We first record a divisibility property of ordinary power sums.

\begin{lemma}\label{lem:odd-power-sums-mod-p2}
Let \(p\ge5\) be prime and put
\[
S_m(p):=\sum_{a=1}^{p-1}a^m.
\]
If \(m\ge1\) is odd, then
\[
v_p\bigl(S_m(p)\bigr)\ge1.
\]
Moreover,
\[
v_p\bigl(S_m(p)\bigr)\ge2
\]
unless
\[
m\equiv1\pmod{p-1}.
\]
More precisely,
\[
\frac{S_m(p)}p
\equiv
\begin{cases}
-\dfrac{m}{2}\pmod p,
& m\equiv1\pmod{p-1},\\[2mm]
0\pmod p,
& m\not\equiv1\pmod{p-1}.
\end{cases}
\]
\end{lemma}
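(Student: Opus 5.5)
We need to prove the statement for odd $m\ge1$ and $p\ge5$:
\[
\frac{S_m(p)}{p}\equiv -\frac m2\ \text{ if } m\equiv1 \pmod{p-1},\qquad \frac{S_m(p)}{p}\equiv 0 \text{ otherwise}.
\]

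The plan is to pair $a$ with $p-a$. Since $m$ is odd,
\[
2S_m(p)=\sum_{a=1}^{p-1}\bigl(a^m+(p-a)^m\bigr).
\]
Expand $(p-a)^m=-a^m+mpa^{m-1}-\binom m2p^2a^{m-2}+\cdots$ by the binomial theorem. This gives
\[
2S_m(p)\equiv mp\,S_{m-1}(p)\pmod{p^2},
\]
because every remaining term carries $p^2$ times an integer. (If $m=1$ the expansion stops after the linear term, and the identity is exact.) Since $p$ is odd, dividing by $2$ is harmless, so
\[
S_m(p)\equiv \tfrac{m}{2}\,p\,S_{m-1}(p)\pmod{p^2}.
\]
In particular $v_p(S_m(p))\ge1$ for every odd $m$, which is the first assertion.

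Next, reduce $S_{m-1}(p)$ modulo $p$ using Lemma~\ref{lem:powersum_modp}. It is $\equiv-1$ when $(p-1)\mid m-1$, i.e.\ when $m\equiv1\pmod{p-1}$, and $\equiv0$ otherwise. Substituting into the congruence above gives
\[
S_m(p)/p\equiv \tfrac m2\cdot(-1)=-\tfrac m2 \pmod p
\]
in the first case and $0$ in the second. This is the "more precisely" statement.

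For the middle assertion: in the non-exceptional case it says $v_p(S_m(p))\ge2$ directly. In the exceptional case $m\equiv1\pmod{p-1}$ the residue $-m/2$ may or may not vanish. That is harmless, since the lemma only claims $v_p\ge2$ outside that class. The hypothesis $p\ge5$ is not needed for this argument; it works for any odd $p$.

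I expect no real obstacle. The only care needed is the integrality bookkeeping: all higher binomial terms are integers times $p^k$ with $k\ge2$, and the factor $1/2$ is a $p$-adic unit.
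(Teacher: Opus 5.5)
Your proof is correct, but it follows a different route from the paper. The paper starts from Faulhaber's formula,
\[
S_m(p)=\sum_{j\ge1}\frac{p^j}{j}\binom{m}{j-1}B_{m+1-j}.
\]
It then proceeds as follows:
\begin{enumerate}
\item For $m>1$ it discards the $j=1$ term, because $B_m=0$.
\item It bounds every term with $j\ge3$ using $v_p(B_k)\ge-1$. This is where $p\ge5$ is needed, to guarantee $v_p(3)=0$.
\item It is left with the main term $\frac m2p^2B_{m-1}$, which von Staudt--Clausen evaluates modulo $p^2$: the term vanishes unless $p-1\mid m-1$, and in that case $pB_{m-1}\equiv-1\pmod p$.
\item The case $m=1$ is checked separately.
\end{enumerate}

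You replace all of this with the reflection $a\mapsto p-a$, which gives the integer congruence $2S_m(p)\equiv mp\,S_{m-1}(p)\pmod{p^2}$. After that you only need the mod-$p$ evaluation of $S_{m-1}(p)$ from Lemma~\ref{lem:powersum_modp}.

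Your route has three advantages: it is more elementary (no Bernoulli numbers, no von Staudt--Clausen), it handles $m=1$ uniformly, and, as you note, it also works for $p=3$. The paper's route has the advantage of staying inside the Faulhaber/Bernoulli framework it uses throughout (compare Lemma~\ref{lem:Sr-trunc}), and it displays the main term in Bernoulli form.

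The two main terms are consistent. For $m\ge3$ and $p\ge5$ one has $S_{m-1}(p)\equiv pB_{m-1}\pmod{p^2}$, so $\frac m2pS_{m-1}(p)\equiv\frac m2p^2B_{m-1}\pmod{p^3}$.
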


\begin{proof}
Faulhaber's formula gives
\[
S_m(p)
=
\frac{B_{m+1}(p)-B_{m+1}}{m+1}
=
\sum_{j=1}^{m+1}
\frac1{m+1}\binom{m+1}{j}B_{m+1-j}p^j.
\]
Since \(m\) is odd, \(m+1\) is even. The term with \(j=1\) is
\[
B_m p.
\]
For \(m>1\), we have \(B_m=0\), while for \(m=1\) the same conclusion
below follows directly from
\[
S_1(p)=\frac{p(p-1)}2.
\]

For $m>1$, the term with $j=1$ vanishes because $B_m=0$.  Using
\[
\frac{1}{m+1}\binom{m+1}{j}
=
\frac1j\binom{m}{j-1},
\]
the term with index $j$ may be written as
\[
\frac{p^j}{j}\binom{m}{j-1}B_{m+1-j}.
\]
By the von Staudt--Clausen theorem,
\[
v_p(B_{m+1-j})\ge-1.
\]
Hence, for every $j\ge3$,
\[
v_p\!\left(
\frac{p^j}{j}\binom{m}{j-1}B_{m+1-j}
\right)
\ge
j-v_p(j)-1
\ge2,
\]
because $p\ge5$. Therefore
\[
S_m(p)
\equiv
\frac m2 B_{m-1}p^2
\pmod{p^2\ZZ_{(p)}}.
\]
Here the displayed main term need not itself lie in
$p^2\ZZ_{(p)}$, since $v_p(B_{m-1})$ may equal $-1$.

If $p-1\nmid m-1$, then $B_{m-1}\in\ZZ_{(p)}$, and consequently
\[
S_m(p)\equiv0\pmod{p^2}.
\]
If $p-1\mid m-1$, then the von Staudt--Clausen theorem gives
\[
B_{m-1}=-\frac1p+u,
\qquad
u\in\ZZ_{(p)}.
\]
Thus
\[
S_m(p)\equiv-\frac m2p\pmod{p^2}.
\]
For $m=1$,
\[
S_1(p)=\frac{p(p-1)}2\equiv-\frac p2\pmod{p^2}.
\]
Dividing by $p$ proves the stated formula.\end{proof}

\begin{theorem}[A general depth-$3$ criterion]
\label{thm:general-depth-three}
Let $p\ge5$ be prime and let $n\ge1$ satisfy
\[
n\equiv2\pmod4.
\]
Then
\[
v_p(\G_n(p))\ge2.
\]
If, in addition,
\[
n\not\equiv2\pmod{p-1},
\]
then
\[
v_p(\G_n(p))\ge3.
\]
\end{theorem}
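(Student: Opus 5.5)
The plan is to work directly from the binomial factorization (Lemma~\ref{lem:binomfactor}) with $k=m=p-1$,
\[
\G_n(p)=\sum_{j=0}^{n}\binom nj\,\ii^{\,j}\,S_{n-j}(p-1)\,S_j(p-1).
\]
We split the terms according to the parity of $j$. The even-$j$ part should cancel exactly, and the odd-$j$ part is then controlled by Lemma~\ref{lem:odd-power-sums-mod-p2}.

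\emph{Step 1: cancellation of even $j$.} Since $n$ is even, $j$ and $n-j$ have the same parity. The involution $j\mapsto n-j$ preserves $\binom nj S_{n-j}S_j$ and sends the factor $\ii^{\,j}$ to $\ii^{\,n-j}$. Because $n\equiv2\pmod4$, we have $\ii^{\,n-j}=-\ii^{-j}$. For even $j$ this gives $\ii^{-j}=\ii^{\,j}$, so the factor becomes $-\ii^{\,j}$. The fixed point $j=n/2$ is odd, because $n/2$ is odd, so every even $j$ is paired with a distinct even partner. Consequently the even-$j$ terms cancel in pairs, including the endpoints $j=0,n$, which involve the non-divisible $S_0=p-1$. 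We are left with
\[
\G_n(p)=\sum_{\substack{1\le j\le n-1\\ j\text{ odd}}}\binom nj\,\ii^{\,j}\,S_{n-j}(p-1)\,S_j(p-1).
\]
(Equivalently, one could group each odd $j$ with $n-j$ as well, but this is not needed.)

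\emph{Step 2: the depth-$2$ bound.} For odd $j$, both $j$ and $n-j$ are odd and at least $1$. Since $p\ge5$, Lemma~\ref{lem:odd-power-sums-mod-p2} gives $v_p(S_j)\ge1$ and $v_p(S_{n-j})\ge1$. Every surviving term therefore has valuation at least $2$, which proves $v_p(\G_n(p))\ge2$.

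\emph{Step 3: the depth-$3$ refinement.} By the same lemma, $v_p(S_m)\ge2$ for odd $m\not\equiv1\pmod{p-1}$. A surviving term can therefore have valuation exactly $2$ only if both $j\equiv1$ and $n-j\equiv1\pmod{p-1}$, and then $n\equiv2\pmod{p-1}$. Under the hypothesis $n\not\equiv2\pmod{p-1}$, every odd term has one factor of valuation at least $2$ and the other of valuation at least $1$, so each term is divisible by $p^3$. This gives $v_p(\G_n(p))\ge3$.

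The only delicate point is Step 1: we must check carefully that the sign identity $\ii^{\,n-j}=-\ii^{\,j}$ holds for even $j$ precisely because $n\equiv2\pmod4$, and that no even index is self-paired. This cancellation is what removes the endpoint terms, where $S_0$ is a unit. After that, the argument is a direct valuation count using Lemma~\ref{lem:odd-power-sums-mod-p2}, with $p\ge5$ needed only to invoke that lemma.
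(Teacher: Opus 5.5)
Your proposal is correct and follows essentially the same argument as the paper: the pairing $j\leftrightarrow n-j$ cancels the even-index terms because $\ii^{n-j}=-\ii^{j}$ for even $j$ when $n\equiv2\pmod4$, and the valuation count on the surviving odd terms then uses Lemma~\ref{lem:odd-power-sums-mod-p2}. Your remark that the fixed point $j=n/2$ is odd, so no even index is paired with itself, is a detail the paper leaves implicit.
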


\begin{proof}
From Lemma~\ref{lem:binomfactor},
\[
\G_n(p)
=
\sum_{k=0}^{n}
\binom nk\ii^k
S_k(p-1)S_{n-k}(p-1).
\]
Pair the terms indexed by $k$ and $n-k$. Since $n\equiv2\pmod4$,
we have $\ii^n=-1$. If $k$ is even, then
\[
\ii^{n-k}
=
\ii^n\ii^{-k}
=
-\ii^{-k}
=
-\ii^k,
\]
so the paired terms cancel. Therefore
\begin{equation}\label{eq:only-odd-general}
\G_n(p)
=
\sum_{\substack{0\le k\le n\\k\text{ odd}}}
\binom nk\ii^k
S_k(p-1)S_{n-k}(p-1).
\end{equation}

For odd $k$, both $k$ and $n-k$ are odd. By
Lemma~\ref{lem:odd-power-sums-mod-p2}, both corresponding power sums
are divisible by $p$. Hence every term in
\eqref{eq:only-odd-general} is divisible by $p^2$.

Both power sums can have valuation exactly $1$ only if
\[
k\equiv1\pmod{p-1},
\qquad
n-k\equiv1\pmod{p-1}.
\]
These two congruences imply
\[
n\equiv2\pmod{p-1}.
\]
Therefore, if $n\not\equiv2\pmod{p-1}$, at least one of the two power
sums in every summand is divisible by $p^2$. Every summand is then
divisible by $p^3$.
\end{proof}
\begin{theorem}[Exact reduction at multiples of $p-1$]
\label{thm:multiples-p-minus-one-inert}
Let $p\equiv3\pmod4$ be prime and let $r\ge1$. Then
\begin{equation}\label{eq:exact-multiple-mod-p}
\G_{r(p-1)}(p)
\equiv
1+(-1)^r-(p+1)\mathbf 1_{p+1\mid r}
\pmod p,
\end{equation}
where $\mathbf 1_{p+1\mid r}$ is $1$ if $p+1\mid r$ and $0$
otherwise. Equivalently,
\[
\G_{r(p-1)}(p)\equiv
\begin{cases}
0\pmod p,
&r\text{ odd},\\[1mm]
2\pmod p,
&r\text{ even and }p+1\nmid r,\\[1mm]
1\pmod p,
&p+1\mid r.
\end{cases}
\]

If $p\ge7$, then
\[
v_p\bigl(\G_{r(p-1)}(p)\bigr)=0
\qquad\text{if $r$ is even},
\]
whereas
\[
v_p\bigl(\G_{r(p-1)}(p)\bigr)\ge3
\qquad\text{if $r$ is odd}.
\]
Consequently, among the positive multiples of $p-1$,
\[
v_p(\G_n(p))=0
\quad\Longleftrightarrow\quad
\operatorname{lcm}(4,p-1)\mid n.
\]
\end{theorem}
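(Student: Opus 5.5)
The plan is to reduce modulo $p$ on the finite field $\FF_{p^2}=\FF_p(\ii)$, which is a field precisely because $p\equiv3\pmod4$. We have
\[
\G_{r(p-1)}(p)\equiv\sum_{a,b\in\FF_p^\times}(a+b\ii)^{r(p-1)}\pmod p .
\]
For $a+b\ii\ne0$ we have $(a+b\ii)^{p-1}=(a+b\ii)^p/(a+b\ii)=(a-b\ii)/(a+b\ii)$, since Frobenius acts on $\FF_{p^2}$ as conjugation. This quantity is unchanged under scaling $(a,b)\mapsto(\lambda a,\lambda b)$ with $\lambda\in\FF_p^\times$. Grouping by the ratio $t=a/b\in\FF_p^\times$, each $t$ occurs $p-1$ times, so
\[
\G_{r(p-1)}(p)\equiv(p-1)\sum_{t\in\FF_p^\times}\Bigl(\frac{t-\ii}{t+\ii}\Bigr)^{r}\equiv-\sum_{t\in\FF_p^\times}u(t)^r .
\]
The map $t\mapsto u(t)=\frac{t-\ii}{t+\ii}$ is a bijection from $\mathbb P^1(\FF_p)$ onto the norm-one subgroup $\mu_{p+1}\subset\FF_{p^2}^\times$. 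It is a Möbius map, so it is injective; its image has norm one because $\overline{u}=u^{-1}$; and the cardinalities match. Under this bijection $t=0$ maps to $-1$ and $t=\infty$ maps to $1$. Since the sum over $\mu_{p+1}$ of $\zeta^r$ equals $(p+1)\mathbf 1_{p+1\mid r}$, we get
\[
\sum_{t\in\FF_p^\times}u^r=(p+1)\mathbf 1_{p+1\mid r}-1-(-1)^r ,
\]
which gives \eqref{eq:exact-multiple-mod-p}. The three-case version follows by reading off the cases; when $p+1\mid r$, $r$ is even, so the value is $2-(p+1)\equiv1$.

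For the valuation statements, the case of even $r$ is immediate: the residue is $1$ or $2$, hence a unit. For odd $r$, I would exploit symmetry. Put $n=r(p-1)$. Since $p-1\equiv2\pmod4$ and $r$ is odd, $n\equiv2\pmod4$, so Theorem~\ref{thm:general-depth-three} applies. It gives depth $3$ provided $n\not\equiv2\pmod{p-1}$. But $n\equiv0\pmod{p-1}$, and $0\not\equiv2\pmod{p-1}$ because $p-1>2$ when $p\ge5$; in fact $p\ge7$ is assumed. Hence $v_p\ge3$. The final equivalence then follows. For $p\equiv3\pmod4$, $\operatorname{lcm}(4,p-1)=2(p-1)$, and a positive multiple $r(p-1)$ is divisible by $2(p-1)$ exactly when $r$ is even, which is exactly when the valuation is $0$.

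The main point needing care is the bijection with $\mu_{p+1}$ and the bookkeeping of the two excluded points $t=0$ and $t=\infty$, since these produce the correction $1+(-1)^r$. Everything else consists of direct invocations of earlier results.
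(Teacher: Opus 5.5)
Your proposal is correct and follows essentially the same route as the paper: reduce to $\FF_{p^2}$, use the Frobenius identity $(t+\ii)^{p-1}=(t-\ii)/(t+\ii)$ after grouping pairs $(a,b)$ by the ratio $t=a/b$, apply the bijection $\mathbb P^1(\FF_p)\to U$ with the corrections at $t=0,\infty$, and invoke Theorem~\ref{thm:general-depth-three} for odd $r$. Your explicit justification of the bijection (injectivity of the M\"obius map, norm one via $\overline{u}=u^{-1}$, and equal cardinalities) fills in a step the paper only asserts.
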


\begin{proof}
Work in
\[
\FF_{p^2}=\FF_p(\ii),
\qquad
\ii^p=-\ii.
\]
For $a,b\in\FF_p^\times$, write $a=tb$. Since
\[
n=r(p-1),
\]
we have $b^n=1$, and therefore
\begin{align}
\G_{r(p-1)}(p)
&\equiv
\sum_{b\in\FF_p^\times}
\sum_{t\in\FF_p^\times}
b^{r(p-1)}(t+\ii)^{r(p-1)}
\pmod p
\nonumber\\
&\equiv
-\sum_{t\in\FF_p^\times}
(t+\ii)^{r(p-1)}
\pmod p.
\label{eq:G-mod-p-first}
\end{align}

For $t\in\FF_p$,
\[
(t+\ii)^{p-1}
=
\frac{(t+\ii)^p}{t+\ii}
=
\frac{t-\ii}{t+\ii}.
\]
Let
\[
U:=\{u\in\FF_{p^2}^{\times}:u^{p+1}=1\}.
\]
The map
\[
\mathbb P^1(\FF_p)\longrightarrow U,
\qquad
t\longmapsto\frac{t-\ii}{t+\ii},
\]
with value $1$ at $t=\infty$, is a bijection. It sends
\[
0\longmapsto-1,
\qquad
\infty\longmapsto1.
\]
Hence
\[
\sum_{t\in\FF_p^\times}
\left(\frac{t-\ii}{t+\ii}\right)^r
=
\sum_{u\in U}u^r-1-(-1)^r.
\]
Substituting into \eqref{eq:G-mod-p-first}, we obtain
\[
\G_{r(p-1)}(p)
\equiv
-\sum_{u\in U}u^r+1+(-1)^r
\pmod p.
\]

Since $U$ is cyclic of order $p+1$,
\[
\sum_{u\in U}u^r
=
\begin{cases}
0,
&p+1\nmid r,\\
p+1,
&p+1\mid r.
\end{cases}
\]
This proves \eqref{eq:exact-multiple-mod-p} and its three-case
reformulation.

In particular, if $r$ is even, then
\[
\G_{r(p-1)}(p)\equiv1\text{ or }2\pmod p,
\]
and therefore
\[
v_p\bigl(\G_{r(p-1)}(p)\bigr)=0.
\]

Now suppose that $p\ge7$ and $r$ is odd. Since
\[
p-1\equiv2\pmod4,
\]
we have
\[
r(p-1)\equiv2\pmod4.
\]
Moreover,
\[
r(p-1)\equiv0\pmod{p-1},
\]
and, because $p-1\ge6$,
\[
0\not\equiv2\pmod{p-1}.
\]
Therefore Theorem~\ref{thm:general-depth-three}, applied with
$n=r(p-1)$, gives
\[
v_p\bigl(\G_{r(p-1)}(p)\bigr)\ge3.
\]

Finally, since $p-1\equiv2\pmod4$,
\[
\operatorname{lcm}(4,p-1)\mid r(p-1)
\quad\Longleftrightarrow\quad
2\mid r.
\]
The final assertion follows.
\end{proof}

\begin{remark}\label{rem:valuation-not-always-three}
The lower bound in Theorem~\ref{thm:multiples-p-minus-one-inert}
cannot in general be replaced by equality. For example, direct
calculation gives
\[
v_7\bigl(\G_{30}(7)\bigr)=5,
\]
where
\[
30=5(7-1).
\]
Thus, for odd \(r\), the valuation is always at least \(3\), but it may
increase at exceptional values of \(r\).
\end{remark}

\section{The special exponent $n=p$: Bernoulli truncations modulo $p^6$}

Before proving the theorems we need to recall some facts about Bernoulli numbers and Bernoulli polynomials. Let $B_n(x)$ be the Bernoulli polynomials and $B_n:=B_n(0)$ the Bernoulli numbers, with $B_1=-\tfrac12$.
Recall the identity

\[
B_n(x)=\sum_{j=0}^{n}\binom{n}{j}B_{n-j}\,x^j \qquad (n\ge 0).
\]
and Faulhaber's formula
\[
S_r(p-1)=\sum_{t=1}^{p-1}t^r=\frac{B_{r+1}(p)-B_{r+1}}{r+1}.
\]
Expanding $B_{r+1}(p)$ by the Bernoulli-polynomial identity and cancelling the constant term gives the \emph{exact}
identity in $\QQ$:
\begin{equation}\label{eq:faulhaber-p-adic}
S_r(p-1)=\sum_{t=1}^{r+1}\frac{p^t}{t}\binom{r}{t-1}B_{r-t+1}\qquad (r\ge 0).
\end{equation}
 
Throughout this section, congruences are interpreted $p$-adically. Whenever Bernoulli numbers with $p$-integral denominator occur, we work in $\ZZ_{(p)}$. The exceptional term $B_{p-1}$ has $p$-adic valuation $-1$ by the von Staudt--Clausen theorem, but it always occurs multiplied by a sufficient power of $p$ so that the resulting expressions are $p$-integral.
Using \eqref{eq:faulhaber-p-adic} and the standard fact $B_n=0$ for odd $n>1$, we obtain:

\begin{lemma}[Safe $p^6$-truncations for $p\ge 7$ and $r\le p$]\label{lem:Sr-trunc}
Let $p\ge 7$ be prime and write $S_r=S_r(p-1)=\sum_{t=1}^{p-1}t^r$ with $r\le p$.
Then in $\ZZ_{(p)}$:
\begin{enumerate}
\item If $r\ge 6$ is even, then
\begin{equation}\label{eq:Sr-even}
S_r \equiv
pB_r+\frac{p^3}{3!}\,r(r-1)B_{r-2}+\frac{p^5}{5!}\,r(r-1)(r-2)(r-3)B_{r-4}
\pmod{p^6}.
\end{equation}
For $r\in\{2,4\}$ the same congruence holds after adding the extra term $p^rB_1$.
\item If $r\ge 7$ is odd, then
\begin{equation}\label{eq:Sr-odd}
S_r \equiv
\frac{p^2}{2!}\,r\,B_{r-1}+\frac{p^4}{4!}\,r(r-1)(r-2)B_{r-3}
\pmod{p^6}.
\end{equation}
For $r\in\{3,5\}$ the same congruence holds after adding the extra term $p^rB_1$.
\item For $r=1$ we have the exact identity
\begin{equation}\label{eq:S1-exact}
S_1(p-1)=\frac{p(p-1)}{2}.
\end{equation}
\end{enumerate}
\end{lemma}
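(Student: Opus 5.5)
The plan is to start from the exact identity \eqref{eq:faulhaber-p-adic},
\[
S_r(p-1)=\sum_{t=1}^{r+1}\frac{p^t}{t}\binom{r}{t-1}B_{r-t+1},
\]
and discard every term that is $\equiv0\pmod{p^6}$ in $\ZZ_{(p)}$. What survives should be exactly the terms with $t\le5$ and nonzero Bernoulli index, together with the $t=r$ term $p^rB_1$ when $r\le5$. The hypothesis $r\le p$ is what keeps the coefficients $\frac1t\binom{r}{t-1}$ controlled.

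\textbf{Step 1 (odd indices).} Since $B_m=0$ for odd $m>1$, the only odd-index Bernoulli number that can appear is $B_1$, namely in the term $t=r$. This explains why the parity of $r$ selects alternate values of $t$:
\begin{itemize}
\item for $r$ even, the surviving small terms have $t=1,3,5$, with indices $r,r-2,r-4$;
\item for $r$ odd, they have $t=2,4$, with indices $r-1,r-3$.
\end{itemize}
Using $\frac1t\binom{r}{t-1}=\frac{r(r-1)\cdots(r-t+2)}{t!}$ reproduces the displayed coefficients. The term $t=r$ carries $B_1$ and contributes $p^rB_1=-p^r/2$. This is negligible modulo $p^6$ exactly when $r\ge6$, which accounts for the extra term when $r\in\{2,3,4,5\}$. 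For $r\in\{2,4\}$ one must also check that no term with index $r-4<0$ is meant; there the formula reads with $B_{r-4}$ absent or interpreted as a genuine term, and I would verify the small cases directly.

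\textbf{Step 2 (tail bound for $t\ge6$).} A general term has $p$-adic valuation at least
\[
t - v_p(t) + v_p\!\left(\binom{r}{t-1}\right) + v_p(B_{r-t+1}).
\]
By von Staudt--Clausen, $v_p(B_m)\ge-1$, with equality only when $p-1\mid m$. Since $m=r-t+1\le p$, equality forces $m=p-1$, that is, $t=r-p+2\le2$. So for $t\ge6$ the Bernoulli factor is $p$-integral. Moreover $v_p(t)\ge1$ only when $t=p$ (as $t\le r+1\le p+1$). Hence for $6\le t<p$ the valuation is at least $t\ge6$. For $t=p$ or $p+1$ we have $t-1\ge p-1\ge6$, so the valuation is at least $p-1\ge6$.

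\textbf{Step 3 (the surviving small terms).} For $t\le5$ one has $v_p(t)=0$ because $p\ge7$, so $1/t!$ is a unit. The only potential issue is $B_{p-1}$ appearing among $B_r,B_{r-2},B_{r-4}$, which happens when $r=p-1$ (even case, $t=1$) or $r=p$ (odd case, $t=2$). These are precisely the terms retained in the formulas, and retaining them keeps the congruence valid as an identity modulo $p^6$ in $\QQ$ up to $p^6\ZZ_{(p)}$.

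\textbf{Main obstacle.} The delicate point is the bookkeeping of the $B_{p-1}$ terms and the boundary cases $r=p-1$ and $r=p$, including $t=p$, $p+1$ in Step 2. I would handle these first, by explicitly separating $t\in\{r-p+2,\,p,\,p+1\}$.

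Finally, part 3, $S_1(p-1)=\frac{p(p-1)}{2}$, is the elementary closed form.
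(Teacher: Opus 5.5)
Your proposal is correct and follows the paper's proof essentially line by line. It uses the same Faulhaber expansion, the same parity selection of $t$ via $B_m=0$ for odd $m>1$ together with the term $p^rB_1$ at $t=r$, and the same tail bound separating $6\le t\le p-1$ (where $t$ is a unit and the Bernoulli factor is $p$-integral) from $t\in\{p,p+1\}$. Your hesitation about $r\in\{2,4\}$ is unnecessary: for $r=2$ the coefficient $r(r-1)(r-2)(r-3)$ vanishes, and for $r=4$ the index $r-4=0$ gives a genuine $B_0$ term.
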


\begin{proof}
We use the exact identity \eqref{eq:faulhaber-p-adic}:
\[
S_r(p-1)
=
\sum_{t=1}^{r+1}
\frac{p^t}{t}\binom{r}{t-1}B_{r-t+1}.
\]

By the von Staudt--Clausen theorem, every nonzero Bernoulli number has
square-free denominator.  Moreover, $B_m=0$ for every odd $m>1$.

Suppose first that $r$ is even.  Apart from the term containing $B_1$,
which occurs when $t=r$, all even values of $t$ give the odd Bernoulli
index $r-t+1>1$ and therefore vanish.  The nonzero terms with
$t\le5$ are consequently those with $t=1,3,5$, giving
\[
pB_r,\qquad
\frac{p^3}{3!}r(r-1)B_{r-2},\qquad
\frac{p^5}{5!}r(r-1)(r-2)(r-3)B_{r-4}.
\]
The $B_1$-term equals $p^rB_1$ and must be retained only for
$r=2,4$.

Suppose now that $r$ is odd. Apart from the $B_1$-term at $t=r$, all
odd values of $t$ give an odd Bernoulli index greater than $1$ and
therefore vanish. The only nonzero terms with $t\le5$ are those with
$t=2,4$, giving
\[
\frac{p^2}{2!}rB_{r-1},
\qquad
\frac{p^4}{4!}r(r-1)(r-2)B_{r-3}.
\]
Again, the $B_1$-term $p^rB_1$ must be retained only for $r=3,5$.

It remains to justify that all terms with $t\ge6$ vanish modulo
$p^6$. If $6\le t\le p-1$, then $t$ is a $p$-adic unit and
$r-t+1\le p-5$, so $B_{r-t+1}$ has $p$-integral denominator.
Hence the corresponding term lies in $p^t\ZZ_{(p)}\subseteq
p^6\ZZ_{(p)}$.

Since $1\le t\le r+1\le p+1$, the only value of $t$ in this range
that is divisible by $p$ is $t=p$. In that case
\[
\frac{p^t}{t}=p^{p-1}\in p^6\ZZ_{(p)}
\]
because $p\ge7$. The value $t=p+1$ can occur only when $r=p$; then
$t$ is a $p$-adic unit and the Bernoulli index is $0$. This proves the asserted truncations.
The formula for $S_1(p-1)$ is elementary.
\end{proof}

We will apply Lemma~\ref{lem:Sr-trunc} with $r\in\{p-3,p-2,p-1,p\}$.
Note that $p-1$ is even and $p$ is odd, so \eqref{eq:Sr-even} applies to $S_{p-1}(p-1)$ and \eqref{eq:Sr-odd} applies to $S_p(p-1)$.

Write $S_r:=S_r(p-1)$ for brevity. By Lemma~\ref{lem:binomfactor},
\begin{equation}\label{eq:Gp-binomial}
\G_p(p)=\sum_{j=0}^{p}\binom{p}{j}\ii^{\,j}\,S_{p-j}\,S_j.
\end{equation}
We split \eqref{eq:Gp-binomial} into endpoints, near-endpoints, and the bulk:
\[
\G_p(p)
=
\underbrace{\bigl(\binom{p}{0}\ii^0S_pS_0+\binom{p}{p}\ii^pS_0S_p\bigr)}_{\text{(I) }j=0,p}
+
\underbrace{\bigl(\binom{p}{1}\ii^1S_{p-1}S_1+\binom{p}{p-1}\ii^{p-1}S_1S_{p-1}\bigr)}_{\text{(II) }j=1,p-1}
+
\underbrace{\sum_{j=2}^{p-2}\binom{p}{j}\ii^{\,j}S_{p-j}S_j}_{\text{(III) }2\le j\le p-2}.
\]
Since $S_0=p-1$, parts (I) and (II) can be rewritten as
\begin{equation}\label{eq:endpoints-general}
\text{(I)}+\text{(II)}
=
(p-1)(1+\ii^p)S_p
+p(\ii+\ii^{p-1})S_{p-1}S_1.
\end{equation}

\begin{lemma}[Endpoint dichotomy]
\label{lem:endpoint-dichotomy}
Let
\[
E_p:=
\sum_{j\in\{0,1,p-1,p\}}
\binom pj\ii^jS_{p-j}S_j.
\]
Then
\[
E_p=
\begin{cases}
(p-1)(1+\ii)
\left(S_p+\dfrac{p^2}{2}S_{p-1}\right),
&p\equiv1\pmod4,\\[3mm]
(p-1)(1-\ii)
\left(S_p-\dfrac{p^2}{2}S_{p-1}\right),
&p\equiv3\pmod4.
\end{cases}
\]
Moreover,
\[
E_p\equiv p^2(1+\ii)\pmod{p^3}
\qquad(p\equiv1\pmod4),
\]
whereas, for $p\ge7$ and $p\equiv3\pmod4$,
\[
E_p
\equiv
-\frac{p^5}{24}(p-1)^2(p-2)
B_{p-3}(1-\ii)
\pmod{p^6}.
\]
\end{lemma}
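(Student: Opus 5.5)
The plan is to prove the exact identity first, directly from \eqref{eq:endpoints-general}, and then to reduce it modulo the required power of $p$ using the truncations of Lemma~\ref{lem:Sr-trunc}. For the identity, substitute the exact value $S_1=p(p-1)/2$ from \eqref{eq:S1-exact}, so that $pS_1=\tfrac{p^2}{2}(p-1)$. This gives
\[
E_p=(p-1)\Bigl((1+\ii^p)S_p+\tfrac{p^2}{2}(\ii+\ii^{p-1})S_{p-1}\Bigr).
\]
If $p\equiv1\pmod4$, then $\ii^p=\ii$ and $\ii^{p-1}=1$, so both coefficients equal $1+\ii$. If $p\equiv3\pmod4$, then $\ii^p=-\ii$ and $\ii^{p-1}=-1$, so the coefficients are $1-\ii$ and $\ii-1=-(1-\ii)$. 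Either way we get the stated closed forms with no congruence used.

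Next, for $p\ge7$ we apply Lemma~\ref{lem:Sr-trunc}, using \eqref{eq:Sr-odd} with $r=p$ and \eqref{eq:Sr-even} with $r=p-1$:
\[
S_p\equiv \tfrac{p^2}{2}\,(pB_{p-1})+\tfrac{p^5}{24}(p-1)(p-2)B_{p-3},
\]
\[
S_{p-1}\equiv pB_{p-1}+\tfrac{p^3}{6}(p-1)(p-2)B_{p-3}+\tfrac{p^5}{120}(p-1)(p-2)(p-3)(p-4)B_{p-5},
\]
both modulo $p^6$. The key bookkeeping device is to keep the quantity $pB_{p-1}$ as a single symbol. It lies in $\ZZ_{(p)}$ and satisfies $pB_{p-1}\equiv-1\pmod p$ by von Staudt--Clausen, while $B_{p-3}$ and $B_{p-5}$ are $p$-integral. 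Multiplying the second congruence by $p^2/2$ kills the $B_{p-5}$ term modulo $p^6$. This leaves
\[
\tfrac{p^2}{2}S_{p-1}\equiv\tfrac{p^2}{2}(pB_{p-1})+\tfrac{p^5}{12}(p-1)(p-2)B_{p-3}\pmod{p^6}.
\]

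In the inert case we subtract. The two terms $\tfrac{p^2}{2}(pB_{p-1})$ are literally the same element of $\ZZ_{(p)}$, so they cancel exactly, not merely modulo a lower power of $p$. What remains is
\[
\bigl(\tfrac1{24}-\tfrac1{12}\bigr)p^5(p-1)(p-2)B_{p-3}=-\tfrac{p^5}{24}(p-1)(p-2)B_{p-3}.
\]
Multiplying by $(p-1)(1-\ii)$ gives the claimed congruence modulo $p^6$.

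In the split case we only need precision modulo $p^3$. There $S_p\equiv\tfrac{p^2}{2}(pB_{p-1})\equiv-\tfrac{p^2}{2}$ and $\tfrac{p^2}{2}S_{p-1}\equiv-\tfrac{p^2}{2}$, so their sum is $\equiv-p^2$. Multiplying by $(p-1)(1+\ii)\equiv-(1+\ii)\pmod p$ yields $p^2(1+\ii)$. For $p=5$, where Lemma~\ref{lem:Sr-trunc} (stated for $p\ge7$) does not apply, I would verify this directly from \eqref{eq:faulhaber-p-adic} modulo $p^3$, or simply by computing the numbers.

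The main obstacle is the $p$-adic hygiene around $B_{p-1}$, which has valuation $-1$. One must check that every product appearing in the truncations is genuinely in $\ZZ_{(p)}$ before reducing. In particular, the term of $S_p$ written as $p^4\cdot p(p-1)(p-2)B_{p-3}/24$ must be read as $p^5$ times a $p$-integral quantity. One must also check that no hidden $B_{p-1}$ contribution survives at order $p^5$. The cancellation of the $B_{p-1}$ terms relies on grouping them as $pB_{p-1}$ in both $S_p$ and $\tfrac{p^2}{2}S_{p-1}$, so I would write those two terms out exactly to confirm that they coincide.
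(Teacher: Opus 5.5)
Your proposal is correct and is essentially the paper's proof: the exact identity comes from \eqref{eq:endpoints-general} with $S_1=p(p-1)/2$, and the congruences come from the truncations of Lemma~\ref{lem:Sr-trunc} at $r=p$ and $r=p-1$, with the two copies of $\tfrac{p^3}{2}B_{p-1}$ cancelling exactly in the inert case. Your separate treatment of $p=5$ in the split case is a small but genuine improvement, since the paper applies Lemma~\ref{lem:Sr-trunc} there without comment although that lemma is stated only for $p\ge7$; a direct check gives $E_5=22900(1+\ii)\equiv 25(1+\ii)\pmod{125}$, as required.
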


\begin{proof}
The exact formulas follow from
\eqref{eq:endpoints-general}, the identities
\[
\ii^p=
\begin{cases}
\ii,&p\equiv1\pmod4,\\
-\ii,&p\equiv3\pmod4,
\end{cases}
\qquad
\ii^{p-1}=
\begin{cases}
1,&p\equiv1\pmod4,\\
-1,&p\equiv3\pmod4,
\end{cases}
\]
and
\[
S_1=\frac{p(p-1)}2.
\]

If $p\equiv1\pmod4$, Lemma~\ref{lem:Sr-trunc} gives
\[
S_p+\frac{p^2}{2}S_{p-1}
\equiv
p^3B_{p-1}
\pmod{p^3}.
\]
Since
\[
pB_{p-1}\equiv-1\pmod p,
\]
we obtain
\[
E_p\equiv p^2(1+\ii)\pmod{p^3}.
\]

If $p\equiv3\pmod4$, then
\[
S_p-\frac{p^2}{2}S_{p-1}
\equiv
-\frac{p^5}{24}(p-1)(p-2)B_{p-3}
\pmod{p^6},
\]
because the $B_{p-1}$ terms cancel. Multiplication by
$(p-1)(1-\ii)$ gives the result.
\end{proof}

We now treat the two cases $p\equiv 1\pmod 4$ and $p\equiv 3\pmod 4$ separately.

\begin{lemma}[Bulk involution in the inert case]
\label{lem:bulk-involution}
Let $p\ge11$ satisfy $p\equiv3\pmod4$. For even
$4\le k\le p-5$, put
\[
A_k:=
\binom pk\ii^kS_{p-k}S_k
+
\binom p{k+1}\ii^{k+1}S_{p-k-1}S_{k+1}.
\]
Then
\[
A_k
\equiv
p^4C_k\ii^k(1+\ii)
B_kB_{p-1-k}
\pmod{p^6},
\]
where
\[
C_k=
\frac{p-k}{2k}\binom{p-1}{k-1}.
\]
Moreover, if
\[
k'=p-1-k,
\]
then
\[
C_{k'}=C_k,
\qquad
A_{k'}\equiv-A_k\pmod{p^6}.
\]
Consequently,
\[
\sum_{\substack{4\le k\le p-5\\k\text{ even}}}A_k
\equiv0\pmod{p^6}.
\]
\end{lemma}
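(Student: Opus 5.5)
We first compute $A_k$ modulo $p^6$ directly from Lemma~\ref{lem:Sr-trunc}, then read off the symmetry $k\mapsto k'=p-1-k$.

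\textbf{Reduction of the four power sums.} Fix an even $k$ with $4\le k\le p-5$. Then $k$ and $p-1-k$ are even, and $k+1$ and $p-k$ are odd; all four indices lie between $4$ and $p-4$. Every Bernoulli index that appears is therefore at most $p-3$, so it is $p$-integral. By Lemma~\ref{lem:Sr-trunc} (with the extra $p^rB_1$ terms having order at least $p^4$ only when $r\le5$), we have:
\[
S_k\equiv pB_k \pmod{p^3},\qquad S_{p-1-k}\equiv pB_{p-1-k}\pmod{p^3},
\]
\[
S_{k+1}\equiv \tfrac{p^2}{2}(k+1)B_k\pmod{p^4},\qquad S_{p-k}\equiv \tfrac{p^2}{2}(p-k)B_{p-1-k}\pmod{p^4}.
\]
The binomial coefficients $\binom pk$ and $\binom p{k+1}$ are exactly divisible by $p$. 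Each product $S_{p-k}S_k$ and $S_{p-k-1}S_{k+1}$ has valuation at least $3$, so each summand of $A_k$ lies in $p^4\ZZ_{(p)}[\ii]$. To work modulo $p^6$, we need to check that the leading approximations suffice. Multiply $\binom pk$ (valuation $1$) by the product. The error in $S_{p-k}S_k$ is at most $p\cdot p^4+p^2\cdot p^3=O(p^5)$, so after multiplying by $\binom pk$ the error is $O(p^6)$. The same bound holds for the other summand. This gives
\[
A_k\equiv \tfrac{p^3}{2}B_kB_{p-1-k}\,\ii^k\Bigl[\tbinom pk(p-k)+\ii\tbinom p{k+1}(k+1)\Bigr]\pmod{p^6}.
\]
The identities $\binom p{k+1}(k+1)=\binom pk(p-k)$ and $\binom pk=\frac pk\binom{p-1}{k-1}$ collapse the bracket to $(1+\ii)\frac{p(p-k)}{k}\binom{p-1}{k-1}$, which yields exactly $p^4C_k\ii^k(1+\ii)B_kB_{p-1-k}$.

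\textbf{Symmetry.} Set $k'=p-1-k$. Then $C_{k'}=\frac{k+1}{2(p-1-k)}\binom{p-1}{p-2-k}$, and the identity $C_{k'}=C_k$ follows from $\binom{p-1}{k+1}\cdot\frac{k+1}{p-1-k}=\binom{p-1}{k}$ together with $\binom{p-1}{k}=\binom{p-1}{k-1}\frac{p-k}{k}$. Both sides equal $\frac12\binom{p-1}{k}$, which I would verify directly. The Bernoulli product $B_kB_{p-1-k}$ is symmetric under $k\leftrightarrow k'$. Since $p\equiv3\pmod4$ and $k$ is even, $k'=p-1-k$ satisfies $k'\equiv 2-k\pmod 4$, so $\ii^{k'}=-\ii^{k}$ and hence $A_{k'}\equiv-A_k$.

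\textbf{Summation.} The map $k\mapsto k'$ is an involution on the even integers in $[4,p-5]$. A fixed point would require $k=(p-1)/2$, which is odd because $p\equiv3\pmod4$, so there are no fixed points. The terms therefore pair off and the sum vanishes modulo $p^6$.

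The main obstacle is the error bookkeeping in the first step, since the bound must hold uniformly, including the smallest cases $k=4$ and $p-k=5$. There one must confirm that the $B_1$ corrections and the $p^3,p^4$ truncation terms contribute only at order $p^6$ after multiplication by the factor $p$ from the binomial coefficient.
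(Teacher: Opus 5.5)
Your proposal is correct and follows essentially the same route as the paper: you truncate $S_k,S_{p-1-k}$ modulo $p^3$ and $S_{k+1},S_{p-k}$ modulo $p^4$ via Lemma~\ref{lem:Sr-trunc}, and let the factor $p$ in the binomial coefficients absorb the errors. You then collapse the bracket with $(k+1)\binom p{k+1}=(p-k)\binom pk$ and pair $k$ with $k'=p-1-k$ using $\ii^{k'}=-\ii^k$ and $C_{k'}=C_k$; your explicit $O(p^5)$ error bound, the identity $C_k=\tfrac12\binom{p-1}{k}$, and the check that the involution has no fixed point (since $(p-1)/2$ is odd) only spell out details the paper leaves implicit.
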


\begin{proof}
For even $k$ in the stated range, Lemma~\ref{lem:Sr-trunc} gives
\[
S_k\equiv pB_k\pmod{p^3},
\qquad
S_{p-1-k}\equiv pB_{p-1-k}\pmod{p^3},
\]
and
\[
S_{p-k}
\equiv
\frac{p^2}{2}(p-k)B_{p-1-k}
\pmod{p^4},
\]
\[
S_{k+1}
\equiv
\frac{p^2}{2}(k+1)B_k
\pmod{p^4}.
\]
Since the two binomial coefficients are divisible by $p$, all omitted
terms contribute multiples of $p^6$. Hence
\[
A_k
\equiv
\frac{p^3}{2}B_kB_{p-1-k}
\left(
(p-k)\binom pk\ii^k
+
(k+1)\binom p{k+1}\ii^{k+1}
\right)
\pmod{p^6}.
\]
Using
\[
(k+1)\binom p{k+1}
=
(p-k)\binom pk
\]
and
\[
\binom pk
=
\frac pk\binom{p-1}{k-1},
\]
we obtain the stated formula.

Now let $k'=p-1-k$. Then
\[
\ii^{k'}
=
\ii^{p-1-k}
=
-\ii^k,
\]
because $p-1\equiv2\pmod4$ and $k$ is even. Also,
\[
\frac{p-k'}{k'}\binom{p-1}{k'-1}
=
\frac{p-k}{k}\binom{p-1}{k-1},
\]
so $C_{k'}=C_k$. Hence
\[
A_{k'}\equiv-A_k\pmod{p^6}.
\]
The asserted cancellation follows by summing over the involution
$k\mapsto p-1-k$.
\end{proof}

\subsection{Proof of Theorem for the case $p\equiv 1\pmod 4$}

\begin{theorem}
\label{thm:Gp-p1}
If $p\equiv 1\pmod 4$, then in $\ZZ[\ii]$,
\[
\G_p(p)\equiv p^2(1+\ii)\pmod{p^3}.
\]
Equivalently,
\[
v_p(\G_p(p))=2
\qquad\text{and}\qquad
\frac{\G_p(p)}{p^2}\equiv 1+\ii\pmod p.
\]
\end{theorem}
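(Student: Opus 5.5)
Write $\G_p(p)=E_p+\mathrm{(III)}$ as in the splitting of \eqref{eq:Gp-binomial}. Here $E_p$ collects the terms $j\in\{0,1,p-1,p\}$, and (III) is the bulk $\sum_{j=2}^{p-2}\binom pj\ii^jS_{p-j}S_j$. By Lemma~\ref{lem:endpoint-dichotomy} we already have $E_p\equiv p^2(1+\ii)\pmod{p^3}$ when $p\equiv1\pmod4$. So it suffices to show that the bulk is divisible by $p^3$.

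For $2\le j\le p-2$ the binomial coefficient $\binom pj$ is divisible by $p$. Also, both $j$ and $p-j$ lie in $[2,p-2]$, so neither index is divisible by $p-1$. Lemma~\ref{lem:powersum_modp} then gives $S_j\equiv S_{p-j}\equiv0\pmod p$, since these are integers. Hence every bulk term is divisible by $p\cdot p\cdot p=p^3$ in $\ZZ[\ii]$, and (III) $\equiv0\pmod{p^3}$.

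A simpler alternative is to use parity. Since $p$ is odd, exactly one of $j,p-j$ is odd. The odd one satisfies $S_{\mathrm{odd}}\equiv0\pmod{p^2}$ by Lemma~\ref{lem:odd-power-sums-mod-p2}, except when it is $\equiv1\pmod{p-1}$, which in the range $[2,p-2]$ cannot happen. This gives divisibility by $p^4$ directly, though $p^3$ is all we need.

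Combining the two parts gives $\G_p(p)\equiv p^2(1+\ii)\pmod{p^3}$ in $\ZZ_{(p)}[\ii]$. Since $\G_p(p)\in\ZZ[\ii]$, the congruence also holds in $\ZZ[\ii]$. Dividing by $p^2$ gives $\G_p(p)/p^2\equiv1+\ii\pmod p$. As $1+\ii\not\equiv0\pmod p$ (its real part is $1$), we conclude $v_p(\G_p(p))=2$ with our definition $v_p(x+y\ii)=\min\{v_p(x),v_p(y)\}$.

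The only real content lies in the endpoint lemma, and we take it as given. In particular, it uses the non-integral $B_{p-1}$ through $pB_{p-1}\equiv-1\pmod p$, which enters via the $p^3B_{p-1}$ term of $S_p+\tfrac{p^2}{2}S_{p-1}$. The one point to check is that the $p$-adic non-integrality there does not spoil the congruence modulo $p^3$. This is handled by the lemma's truncation. We only double-check that the factor $(p-1)\equiv-1$ combined with $p^3B_{p-1}\equiv -p^2$ yields $+p^2(1+\ii)$, so the sign comes out as stated.
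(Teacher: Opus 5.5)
Your proof is correct and follows the paper's argument. The endpoint lemma supplies $p^2(1+\ii)$ modulo $p^3$, and each bulk term with $2\le j\le p-2$ picks up one factor of $p$ from $\binom pj$ and one from each of $S_j$ and $S_{p-j}$. The paper differs only in checking $p=5$ separately by direct computation ($\G_5(5)=-7100(1+\ii)$), because the truncation lemma behind the endpoint lemma is stated for $p\ge7$. The modulo-$p^3$ endpoint computation still holds at $p=5$ (indeed $E_5=22900(1+\ii)\equiv25(1+\ii)\pmod{125}$), so your uniform treatment loses nothing.
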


Below we prove this theorem. Since $p\equiv 1\pmod 4$ we have $\ii^p=\ii$ and $\ii^{p-1}=1$. For $p=5$, direct computation gives
\[
\G_5(5)=-7100(1+\ii)\equiv25(1+\ii)\pmod{125},
\]
so the theorem holds. We may therefore assume $p\ge13$.

By Lemma~\ref{lem:endpoint-dichotomy}, the endpoint and near-endpoint
terms contribute
\[
p^2(1+\ii)\pmod{p^3}.
\]
For every $2\le j\le p-2$, both $S_j$ and $S_{p-j}$ are divisible by
$p$, while $\binom pj$ is divisible by $p$. Hence every remaining term
is divisible by $p^3$. Therefore
\[
\G_p(p)\equiv p^2(1+\ii)\pmod{p^3}.
\]

\subsection{Proof of Theorem for the case $p\equiv 3\pmod 4$}
Here we will prove Theorem~\ref{thm:Gp-p3}. One can check by explicit computation that for $p=3$ Theorem~\ref{thm:Gp-p3} does not hold, $\G_3(3)=-27+27\ii$. 
So, here, \(p\ge 7\). Since $p\equiv 3\pmod 4$, we have $\ii^p=-\ii$ and $\ii^{p-1}=-1$.
In this case \eqref{eq:endpoints-general} becomes
\[
\text{(I)}+\text{(II)}=(p-1)(1-\ii)S_p+p(\ii-1)S_{p-1}S_1.
\]
Since $\ii-1=-(1-\ii)$ and $S_1=\frac{p(p-1)}{2}$, we may factor $(p-1)(1-\ii)$:
\begin{equation}\label{eq:endpoints-inert}
\text{(I)}+\text{(II)}=(p-1)(1-\ii)\Bigl(S_p-\frac{p^2}{2}S_{p-1}\Bigr).
\end{equation}

\paragraph{Step 1: the endpoint contribution.}
By Lemma~\ref{lem:endpoint-dichotomy},
\begin{equation}\label{eq:endpoints-result}
\textup{(I)}+\textup{(II)}
\equiv
-\frac{p^5}{24}(p-1)^2(p-2)B_{p-3}(1-\ii)
\pmod{p^6}.
\end{equation}
\paragraph{Step 2: the bulk contribution.}
After removing the boundary indices
\[
j\in\{2,3,p-3,p-2\},
\]
the remaining terms may be grouped as
\[
\textup{(III)}_{\mathrm{bulk}}
=
\sum_{\substack{4\le k\le p-5\\k\text{ even}}}A_k.
\]
For $p=7$ this sum is empty. For $p\ge11$,
Lemma~\ref{lem:bulk-involution} gives
\[
\textup{(III)}_{\mathrm{bulk}}
\equiv0\pmod{p^6}.
\]

\paragraph{Step 3: the boundary terms $j=2,3,p-3,p-2$.}
Write
\[
\text{(III)}_{\mathrm{bdry}}:=\sum_{j\in\{2,3,p-3,p-2\}}\binom{p}{j}\ii^{\,j}S_{p-j}S_j.
\]
Using $p\equiv 3\pmod 4$, we have
\[
\ii^2=-1,\qquad \ii^3=-\ii,\qquad \ii^{p-2}=\ii,\qquad \ii^{p-3}=1.
\]
Also $\binom{p}{p-2}=\binom{p}{2}$ and $\binom{p}{p-3}=\binom{p}{3}$, and $S_{p-j}S_j=S_jS_{p-j}$.
Thus
\begin{align*}
\text{(III)}_{\mathrm{bdry}}
&=
\binom{p}{2}(-1)S_{p-2}S_2+\binom{p}{p-2}\ii S_2S_{p-2}
+\binom{p}{3}(-\ii)S_{p-3}S_3+\binom{p}{p-3}(1)S_3S_{p-3}\\
&=(1-\ii)\Bigl(\binom{p}{3}S_{p-3}S_3-\binom{p}{2}S_{p-2}S_2\Bigr).
\end{align*}

Now we expand each factor to the precision needed modulo $p^6$.

First, $S_2$ and $S_3$ are exact:
\[
S_2=\sum_{t=1}^{p-1}t^2=\frac{p(p-1)(2p-1)}{6},
\qquad
S_3=\sum_{t=1}^{p-1}t^3=\Bigl(\frac{p(p-1)}{2}\Bigr)^2.
\]
Next, Lemma~\ref{lem:Sr-trunc} gives:
\[
S_{p-2}\equiv  \frac{p^2}{2!}(p-2)B_{p-3}    \pmod{p^4},
\qquad
S_{p-3}\equiv pB_{p-3}\pmod{p^3}.
\]

Compute the two products:
\[
\binom{p}{3}S_{p-3}S_3
\equiv
\frac{p(p-1)(p-2)}{6}\cdot pB_{p-3}\cdot \frac{p^2(p-1)^2}{4}
=
\frac{p^4}{24}(p-1)^3(p-2)B_{p-3}
\pmod{p^6},
\]
\[
\binom{p}{2}S_{p-2}S_2
\equiv
\frac{p(p-1)}{2}\cdot \frac{p^2}{2}(p-2)B_{p-3}\cdot \frac{p(p-1)(2p-1)}{6}
=
\frac{p^4}{24}(p-1)^2(p-2)(2p-1)B_{p-3}
\pmod{p^6}.
\]
Subtracting,
\begin{align*}
\binom{p}{3}S_{p-3}S_3-\binom{p}{2}S_{p-2}S_2
&\equiv
\frac{p^4}{24}(p-1)^2(p-2)\Bigl((p-1)-(2p-1)\Bigr)B_{p-3}\\
&=
-\frac{p^5}{24}(p-1)^2(p-2)B_{p-3}
\pmod{p^6}.
\end{align*}
Therefore
\begin{equation}\label{eq:bdry-result}
\text{(III)}_{\mathrm{bdry}}
\equiv
-\frac{p^5}{24}(p-1)^2(p-2)B_{p-3}\,(1-\ii)
\pmod{p^6}.
\end{equation}

\paragraph{Step 4: assemble all parts.}
We have
\[
\G_p(p)=(\text{(I)}+\text{(II)})+\text{(III)}_{\mathrm{bdry}}+\text{(III)}_{\mathrm{bulk}},
\]
and by \eqref{eq:endpoints-result}, \eqref{eq:bdry-result}, 
\[
\G_p(p)
\equiv
-\frac{p^5}{24}(p-1)^2(p-2)B_{p-3}(1-\ii)
-\frac{p^5}{24}(p-1)^2(p-2)B_{p-3}(1-\ii)
\pmod{p^6}.
\]
Thus
\begin{equation}\label{eq:final-inert}
\G_p(p)
\equiv
-\frac{p^5}{12}(p-1)^2(p-2)\,B_{p-3}\,(1-\ii)
\pmod{p^6}.
\end{equation}
The congruence implies
\[
p^5\mid\G_p(p)
\qquad\text{in }\ZZ[\ii].
\]
On the other hand, since $p\equiv3\pmod4$, Corollary~\ref{cor:location}
gives
\[
\G_p(p)=x(1-\ii)
\]
for some $x\in\ZZ$. Because the rational prime $p$ is coprime in
$\ZZ[\ii]$ to $1-\ii$, the divisibility $p^5\mid x(1-\ii)$ implies
$p^5\mid x$. Therefore
\[
(1-\ii)p^5\mid\G_p(p)
\qquad\text{in }\ZZ[\ii].
\]

This proves the congruence in Theorem~\ref{thm:Gp-p3}. The divisibility
and valuation assertions then follow as explained above.


\begin{remark}
By Theorem~\ref{thm:Gp-p3}, for $p\equiv 3\pmod 4$ one has
\[
v_p(\G_p(p))\ge 6
\quad\Longleftrightarrow\quad
B_{p-3}\equiv 0\pmod p,
\]
and hence $v_p(\G_p(p))=5$ is equivalent to $B_{p-3}\not\equiv 0\pmod p$.

Primes $p$ satisfying $B_{p-3}\equiv 0\pmod p$ are precisely the \emph{Wolstenholme primes}; equivalently,
\[
\binom{2p-1}{p-1}\equiv 1\pmod{p^4}.
\]
The two known examples are $p=16843$ and $p=2124679$; computational
searches have found no further Wolstenholme primes below $10^{11}$.
\end{remark}

\section{The cases \(p=3\) and \(p=5\)}

The exponential generating function of the Gaussian power sums is
\begin{equation}\label{eq:general-egf}
\mathcal E_p(t)
:=
\sum_{n\ge0}\G_n(p)\frac{t^n}{n!}
=
e^{\frac p2(1+\ii)t}
\frac{\sinh\!\bigl(\frac{p-1}{2}t\bigr)}{\sinh(t/2)}
\frac{\sin\!\bigl(\frac{p-1}{2}t\bigr)}{\sin(t/2)}.
\end{equation}
Indeed,
\[
\sum_{a=1}^{p-1}e^{at}
=
e^{pt/2}
\frac{\sinh\!\bigl(\frac{p-1}{2}t\bigr)}{\sinh(t/2)}
\]
and
\[
\sum_{b=1}^{p-1}e^{\ii bt}
=
e^{\ii pt/2}
\frac{\sin\!\bigl(\frac{p-1}{2}t\bigr)}{\sin(t/2)}.
\]
\begin{lemma}[Unique minimal coefficient]
\label{lem:unique-minimal-coefficient}
Let $q$ be an odd prime, and suppose
\[
E(t)=
C e^{\lambda t}
\sum_{m\ge0}u_m\frac{t^{4m}}{(4m)!}
=
\sum_{n\ge0}a_n\frac{t^n}{n!},
\]
where $C$ and every $u_m$ are units at every prime of $\ZZ[\ii]$
above $q$, and
\[
v_{\mathfrak q}(\lambda)=1
\]
for every such prime $\mathfrak q$. Assume that, for
$r\in\{0,1,2,3\}$ and $s\ge1$,
\[
v_q\!\left(\frac{(r+4s)!}{r!}\right)<4s.
\]
Then, for $n=4Q+r$,
\[
v_q(a_n)
=
r+v_q\binom nr.
\]
\end{lemma}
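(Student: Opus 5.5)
Multiply the two exponential series and compute the coefficient directly. Write $e^{\lambda t}=\sum_k \lambda^k t^k/k!$. Then
\[
a_n=C\sum_{m:\,4m\le n}\binom{n}{4m}u_m\lambda^{n-4m}.
\]
For $n=4Q+r$ the exponents $n-4m$ are exactly $r+4s$ with $s=Q-m\in\{0,\dots,Q\}$. Since $C$ is a unit at every prime above $q$, it suffices to show that among the summands
\[
T_s:=\binom{n}{r+4s}u_{Q-s}\lambda^{r+4s}
\]
the term $s=0$ has strictly smallest valuation at each prime $\mathfrak q\mid q$. One then translates back to $v_q$.

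First I compute $v_{\mathfrak q}(T_0)$. Because $u_Q$ is a unit, $v_{\mathfrak q}(T_0)=e\,v_q\binom nr+r$, where $e=v_{\mathfrak q}(q)$ is the ramification index. Next I compare $T_s$ with $T_0$ for $s\ge1$ using the identity
\[
\binom{n}{r+4s}=\binom nr\cdot\frac{(n-r)!\,r!}{(r+4s)!\,(n-r-4s)!}=\binom nr\binom{n-r}{4s}\frac{r!\,(4s)!}{(r+4s)!}.
\]
The factor $\binom{n-r}{4s}$ is an integer and $(4s)!$ is an integer, so
\[
v_q\binom{n}{r+4s}\ge v_q\binom nr-v_q\!\left(\frac{(r+4s)!}{r!}\right).
\]
Combining this with $v_{\mathfrak q}(\lambda^{r+4s})=r+4s$ gives
\[
v_{\mathfrak q}(T_s)-v_{\mathfrak q}(T_0)\ge 4s-e\,v_q\!\left(\frac{(r+4s)!}{r!}\right).
\]
This is strictly positive by the hypothesis, at least when $e=1$.

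The main obstacle is the normalization of valuations. If $q\equiv3\pmod4$, then $q$ is inert, $e=1$, and $v_{\mathfrak q}$ restricts to $v_q$, so the argument goes through verbatim. If $q$ is split, again $e=1$ and it is fine. The problematic case is $q$ ramified, but that cannot happen, since $q$ is odd. So $e=1$ always.

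Given $e=1$, the ultrametric inequality yields $v_{\mathfrak q}(a_n)=v_{\mathfrak q}(T_0)=r+v_q\binom nr$ for every $\mathfrak q\mid q$. Finally, $v_q(x+y\ii)=\min\{v_q(x),v_q(y)\}$ equals $\min_{\mathfrak q\mid q}v_{\mathfrak q}(x+y\ii)$, because $q\ZZ[\ii]=\prod\mathfrak q$ is squarefree. Since all these minima agree, we get $v_q(a_n)=r+v_q\binom nr$.
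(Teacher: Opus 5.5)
Your argument is correct and is essentially the paper's proof. You extract $a_n=C\lambda^r\sum_{s=0}^{Q}\binom{n}{r+4s}u_{Q-s}\lambda^{4s}$, use $v_q\binom{n}{r+4s}\ge v_q\binom nr-v_q\bigl((r+4s)!/r!\bigr)$ with the hypothesis to make the $s=0$ term the unique one of minimal valuation, and your checks that $e=1$ for odd $q$ and that $v_q=\min_{\mathfrak q\mid q}v_{\mathfrak q}$ make explicit the normalization the paper leaves implicit.
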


\begin{proof}
Coefficient extraction gives
\[
a_n
=
C\lambda^r
\sum_{s=0}^{Q}
\binom{n}{r+4s}
u_{Q-s}\lambda^{4s}.
\]
The term with $s=0$ has valuation
\[
r+v_q\binom nr.
\]
For $s\ge1$,
\[
v_q\binom{n}{r+4s}
\ge
v_q\binom nr
-
v_q\!\left(\frac{(r+4s)!}{r!}\right),
\]
so the $s$-th term has valuation strictly larger than that of the
$s=0$ term. Thus the latter is the unique term of minimal valuation.
\end{proof}

\begin{theorem}[The case \(p=3\)]\label{thm:p3}
For every \(n\ge1\),
\[
v_3\bigl(\G_n(3)\bigr)
=
\begin{cases}
0,
& n\equiv0\pmod4,\\[1mm]
1+v_3(n),
& n\equiv1\pmod4,\\[1mm]
2+v_3(n)+v_3(n-1),
& n\equiv2\pmod4,\\[1mm]
2+v_3(n)+v_3(n-1)+v_3(n-2),
& n\equiv3\pmod4.
\end{cases}
\]
Equivalently, if
\[
n=4Q+r,
\qquad
0\le r\le3,
\]
then
\[
v_3\bigl(\G_n(3)\bigr)
=
r+v_3\binom nr.
\]
\end{theorem}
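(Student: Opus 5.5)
We apply Lemma~\ref{lem:unique-minimal-coefficient} with $q=3$ to the exponential generating function \eqref{eq:general-egf} at $p=3$. For $p=3$ we have $\frac{p-1}{2}=1$, so both quotients $\sinh(t)/\sinh(t/2)$ and $\sin(t)/\sin(t/2)$ simplify by the double-angle formulas:
\[
\frac{\sinh t}{\sinh(t/2)}=2\cosh(t/2),
\qquad
\frac{\sin t}{\sin(t/2)}=2\cos(t/2).
\]
Hence
\[
\mathcal E_3(t)=4e^{\frac32(1+\ii)t}\cosh(t/2)\cos(t/2).
\]
The product $\cosh(x)\cos(x)$ is invariant under $x\mapsto \ii x$. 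Concretely, $\cosh x\cos x=\frac14\sum_{\epsilon^4=1}e^{(1+\ii)\epsilon x}\cdot(\ldots)$, or more simply, from $\cosh x\cos x=\mathrm{Re}$-type expansion, $\cosh x\cos x=\sum_{m\ge0}(-4)^m\frac{x^{4m}}{(4m)!}$. This can be checked via $\cosh x\cos x=\tfrac12(\cosh((1+\ii)x)+\cosh((1-\ii)x))$ and $(1\pm\ii)^{4m}=(-4)^m$, which kills the $x^{4m+2}$ terms since $(1\pm\ii)^{2}=\pm2\ii$ cancel. With $x=t/2$ this gives
\[
\mathcal E_3(t)=4e^{\lambda t}\sum_{m\ge0}u_m\frac{t^{4m}}{(4m)!},\qquad \lambda=\tfrac32(1+\ii),\quad u_m=(-4)^m/16^m=(-1/4)^m .
\]
Thus $C=4$ and each $u_m$ is a $3$-adic unit. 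Since $3$ is inert in $\ZZ[\ii]$, the only prime above $3$ is $(3)$, and $v_3(\lambda)=1$ because $1+\ii$ is a unit at $3$.

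It remains to verify the hypothesis $v_3\bigl((r+4s)!/r!\bigr)<4s$ for $r\in\{0,1,2,3\}$ and $s\ge1$. We would bound this by $v_3((r+4s)!)\le (r+4s-1)/2$ using Legendre's formula ($v_3(N!)=(N-s_3(N))/2\le(N-1)/2$). For $r\le3$ this gives at most $(4s+2)/2=2s+1<4s$ for $s\ge1$. This is the only check needing care, and it is routine.

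The lemma then yields $v_3(\G_n(3))=r+v_3\binom nr$ for $n=4Q+r$, where $v_3$ on $\ZZ[\ii]$ is the min over coordinates; this agrees with the valuation at the inert prime $3$. The case list follows by expanding $\binom nr$ for $r=0,1,2,3$: the factors $2$ and $6$ contribute $v_3(6)=1$ when $r=3$, which turns $3+\dots$ into $2+v_3(n)+v_3(n-1)+v_3(n-2)$. The main obstacle is merely getting the $\cosh\cdot\cos$ series right (confirming that only exponents divisible by $4$ survive, with unit coefficients).
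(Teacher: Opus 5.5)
Your proposal is correct and follows the paper's proof essentially step for step. You specialize the exponential generating function to $4e^{\frac32(1+\ii)t}\cosh(t/2)\cos(t/2)$, expand it as $\sum_{m\ge0}(-1/4)^m t^{4m}/(4m)!$, verify the Legendre bound, and apply Lemma~\ref{lem:unique-minimal-coefficient} at the inert prime $3$. The differences are cosmetic: you obtain the series via $\cosh((1\pm\ii)x)$ rather than the four exponentials $e^{(\pm1\pm\ii)t/2}$, and your bound $2s+1$ is a little weaker than the paper's $2s$ but still $<4s$. You should delete the garbled aside about $\tfrac14\sum_{\epsilon^4=1}\cdots$.
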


\begin{proof}
Putting \(p=3\) in \eqref{eq:general-egf}, we obtain
\[
\mathcal E_3(t)
=
e^{\frac32(1+\ii)t}
\frac{\sinh t}{\sinh(t/2)}
\frac{\sin t}{\sin(t/2)}.
\]
Since
\[
\frac{\sinh t}{\sinh(t/2)}
=
2\cosh(t/2),
\qquad
\frac{\sin t}{\sin(t/2)}
=
2\cos(t/2),
\]
it follows that
\begin{equation}\label{eq:E3}
\mathcal E_3(t)
=
4e^{\alpha t}U_3(t),
\qquad
\alpha=\frac32(1+\ii),
\end{equation}
where
\[
U_3(t)
=
\cosh(t/2)\cos(t/2).
\]

We have
\[
\cosh(t/2)\cos(t/2)
=
\frac14
\sum_{\varepsilon,\delta\in\{\pm1\}}
e^{(\varepsilon+\delta\ii)t/2}.
\]
The four numbers
\[
\frac{\varepsilon+\delta\ii}{2},
\qquad
\varepsilon,\delta\in\{\pm1\},
\]
all have fourth power \(-1/4\). Therefore
\begin{equation}\label{eq:U3}
U_3(t)
=
\sum_{q\ge0}
\left(-\frac14\right)^q
\frac{t^{4q}}{(4q)!}.
\end{equation}

Here $4$ and all coefficients $(-1/4)^q$ are $3$-adic units, while
\[
v_3(\alpha)=1.
\]
For $r\in\{0,1,2,3\}$ and $s\ge1$, Legendre's formula gives
\[
v_3\!\left(\frac{(r+4s)!}{r!}\right)\le2s<4s.
\]
Lemma~\ref{lem:unique-minimal-coefficient} therefore yields
\[
v_3(\G_n(3))
=
r+v_3\binom nr
\qquad(n=4Q+r).
\]

For $r=0,1,2$, this immediately gives the first three displayed
formulas. For $r=3$,
\[
3+v_3\binom n3
=
2+v_3(n)+v_3(n-1)+v_3(n-2),
\]
because $v_3(3!)=1$.

\end{proof}

\begin{theorem}[The case \(p=5\)]\label{thm:p5}
For every \(n\ge1\),
\[
v_5\bigl(\G_n(5)\bigr)
=
\begin{cases}
0,
& n\equiv0\pmod4,\\[1mm]
1+v_5(n),
& n\equiv1\pmod4,\\[1mm]
2+v_5(n)+v_5(n-1),
& n\equiv2\pmod4,\\[1mm]
3+v_5(n)+v_5(n-1)+v_5(n-2),
& n\equiv3\pmod4.
\end{cases}
\]
Equivalently, if
\[
n=4Q+r,
\qquad
0\le r\le3,
\]
then
\[
v_5\bigl(\G_n(5)\bigr)
=
r+v_5\binom nr.
\]
\end{theorem}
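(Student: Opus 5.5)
The plan is to mirror the proof of Theorem~\ref{thm:p3}. First, put $p=5$ in \eqref{eq:general-egf}:
\[
\mathcal E_5(t)=e^{\frac52(1+\ii)t}\,\frac{\sinh 2t}{\sinh(t/2)}\,\frac{\sin 2t}{\sin(t/2)}.
\]
Then rewrite the two quotients as finite exponential sums. We have $\sum_{a=1}^4 e^{at}=e^{5t/2}\sum_{\varepsilon\in\{\pm1/2,\pm3/2\}}e^{\varepsilon t}$, and similarly for the sine factor. Hence
\[
\mathcal E_5(t)=e^{\lambda t}\sum_{\varepsilon,\delta\in\{\pm\frac12,\pm\frac32\}}e^{(\varepsilon+\delta\ii)t},\qquad \lambda=\tfrac52(1+\ii),
\]
so $v_{\mathfrak q}(\lambda)=1$ for every prime $\mathfrak q\mid 5$. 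Note that $5$ splits in $\ZZ[\ii]$, so there are two such primes; both divide $\lambda$ exactly once, because $5=(2+\ii)(2-\ii)$ and $1+\ii$ is a unit at both.

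Next, show that the double sum $U_5(t)$ only has exponents $\equiv0\pmod4$. The set $\{\varepsilon+\delta\ii\}$ is stable under multiplication by $\ii$, since $\ii(\varepsilon+\delta\ii)=-\delta+\varepsilon\ii$ and the index set is symmetric under negation. Therefore the power sums $\sum(\varepsilon+\delta\ii)^n$ vanish unless $4\mid n$. This gives
\[
U_5(t)=\sum_{m\ge0}u_m\frac{t^{4m}}{(4m)!},\qquad u_m=\sum_{\varepsilon,\delta}(\varepsilon+\delta\ii)^{4m}.
\]
Then set $C=1$ and apply Lemma~\ref{lem:unique-minimal-coefficient} with $q=5$.

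The two hypotheses are checked as follows.
\begin{itemize}
\item Legendre's bound gives $v_5\bigl((r+4s)!/r!\bigr)\le\lfloor (r+4s)/4\rfloor\le s+\dots$. More simply, $v_5(N!)\le (N-1)/4$, so $v_5\bigl((r+4s)!/r!\bigr)\le (r+4s)/4<4s$ for $s\ge1$, $r\le3$.
\item The harder part is showing that every $u_m$ is a unit at both primes above $5$. The values $z=\varepsilon+\delta\ii$ reduce modulo $\mathfrak q=(2+\ii)$, where $\ii\equiv-2\equiv 3$, to elements of $\FF_5$. Since $\varepsilon,\delta\in\{\pm\tfrac12,\pm\tfrac32\}=\{\pm3,\pm2\}$ in $\FF_5$ (as $1/2=3$ and $3/2=4\cdot3=\dots$), we have $z\equiv\varepsilon+3\delta$. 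I would count how many of the $16$ pairs give $z\equiv0$ and how many give $z\in\FF_5^\times$. Then $u_m\equiv\#\{z\not\equiv0\}$ for $m\ge1$, because $z^4=1$ on $\FF_5^\times$, and $u_0=16\equiv1$.
\end{itemize}

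That count decides unit-ness, and it is the main obstacle. Concretely, $\varepsilon+3\delta\equiv0$ means $\varepsilon\equiv2\delta$, so for each of the $4$ values of $\delta$ there is exactly one $\varepsilon$ in the set $\{\pm2,\pm3\}$ that works. This gives $4$ zeros and $12$ nonzero values, so $u_m\equiv12\equiv2\pmod{\mathfrak q}$ for $m\ge1$, which is a unit. For the conjugate prime the argument is symmetric (replace $\ii\equiv2$). Hence all $u_m$ are units.

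Lemma~\ref{lem:unique-minimal-coefficient} then gives $v_5(\G_n(5))=r+v_5\binom nr$. Here $v_5$ of a Gaussian integer agrees with $\min$ over the two primes, which is the same as the valuation in the lemma. Finally, since $v_5(r!)=0$ for $r\le3$, we get $v_5\binom nr=\sum_{i<r}v_5(n-i)$, which yields the four displayed cases.
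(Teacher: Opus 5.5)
Your proposal is correct and is essentially the paper's proof. You use the same factorization of $\mathcal E_5(t)$ into $e^{\frac52(1+\ii)t}$ times an exponential sum over the sixteen points $\varepsilon+\delta\ii$, the same $\ii$-invariance argument leaving only exponents $4m$, the same Legendre bound, and the same application of Lemma~\ref{lem:unique-minimal-coefficient} at both primes above $5$. The only difference is how you check that $u_m$ is a unit: you count that exactly $4$ of the $16$ points vanish modulo $(2+\ii)$, giving $u_m\equiv 12\equiv 2$, whereas the paper computes the fourth powers $z_j^4$ of orbit representatives and reduces them via $\ZZ[\ii]/(5)\cong\FF_5\times\FF_5$.

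There is one arithmetic slip. Since $\tfrac32\equiv 4\equiv -1$ in $\FF_5$, the set $\{\pm\tfrac12,\pm\tfrac32\}$ reduces to $\{\pm1,\pm2\}=\FF_5^\times$, not to $\{\pm3,\pm2\}$. Your count of exactly one $\varepsilon\equiv 2\delta$ for each $\delta$ is right precisely for this correct set, so the conclusion is unaffected.
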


\begin{proof}
Putting \(p=5\) in \eqref{eq:general-egf}, we obtain
\[
\mathcal E_5(t)
=
e^{\frac52(1+\ii)t}
\frac{\sinh(2t)}{\sinh(t/2)}
\frac{\sin(2t)}{\sin(t/2)}.
\]
Using
\[
\frac{\sinh(2t)}{\sinh(t/2)}
=
4\cosh(t/2)\cosh t
\]
and
\[
\frac{\sin(2t)}{\sin(t/2)}
=
4\cos(t/2)\cos t,
\]
we find
\begin{equation}\label{eq:E5}
\mathcal E_5(t)
=
16e^{\beta t}U_5(t),
\qquad
\beta=\frac52(1+\ii),
\end{equation}
where
\[
U_5(t)
=
\cosh(t/2)\cos(t/2)\cosh t\cos t.
\]

We first determine the coefficients of \(U_5(t)\). We have
\[
U_5(t)
=
\frac1{16}
\sum_{x\in\{\pm\frac12,\pm\frac32\}}e^{xt}
\sum_{y\in\{\pm\frac12,\pm\frac32\}}e^{\ii yt}.
\]
Thus
\[
U_5(t)
=
\frac1{16}\sum_{z\in S}e^{zt},
\]
where
\[
S=
\left\{
x+\ii y:
x,y\in
\left\{\pm\frac12,\pm\frac32\right\}
\right\}.
\]
The set \(S\) is invariant under multiplication by \(\ii\), so only
powers \(t^{4q}\) occur. Representatives for its four orbits under
multiplication by \(\ii\) are
\[
z_1=\frac{1+\ii}{2},
\qquad
z_2=\frac{3+3\ii}{2},
\qquad
z_3=\frac{3+\ii}{2},
\qquad
z_4=\frac{1+3\ii}{2}.
\]
It follows that
\[
U_5(t)
=
\sum_{q\ge0}c_q\frac{t^{4q}}{(4q)!},
\]
where
\[
c_q
=
\frac14
\left(
z_1^{4q}+z_2^{4q}+z_3^{4q}+z_4^{4q}
\right).
\]
A direct computation gives
\[
z_1^4=-\frac14,
\qquad
z_2^4=-\frac{81}{4},
\qquad
z_3^4=\frac{7+24\ii}{4},
\qquad
z_4^4=\frac{7-24\ii}{4}.
\]
Hence
\begin{equation}\label{eq:cq}
c_q
=
\frac{
(-1)^q+(-81)^q+(7+24\ii)^q+(7-24\ii)^q
}{4^{q+1}}.
\end{equation}

We now work with the two Gaussian primes
\[
\pi=2+\ii,
\qquad
\bar\pi=2-\ii,
\qquad
5=\pi\bar\pi.
\]
Let \(v_\pi\) and \(v_{\bar\pi}\) be the corresponding normalized
discrete valuations on \(\QQ(\ii)\). For every nonzero
\(z\in\ZZ[\ii]\),
\begin{equation}\label{eq:v5-min}
v_5(z)
=
\min\{v_\pi(z),v_{\bar\pi}(z)\}.
\end{equation}
Moreover, for every rational integer \(m\ne0\),
\[
v_\pi(m)=v_{\bar\pi}(m)=v_5(m).
\]

We claim that every \(c_q\) is a unit at both \(\pi\) and \(\bar\pi\).
For \(q=0\), this follows from \(c_0=1\). Suppose \(q\ge1\). By the
Chinese remainder theorem,
\[
\ZZ[\ii]/(5)
\cong
\ZZ[\ii]/(\pi)\times\ZZ[\ii]/(\bar\pi)
\cong
\FF_5\times\FF_5,
\]
where the components corresponding to $(\pi)$ and $(\bar\pi)$ are
obtained respectively by substituting
\[
\ii=-2
\qquad\text{and}\qquad
\ii=2.
\] Under this isomorphism,
\[
7+24\ii\longmapsto(4,0),
\qquad
7-24\ii\longmapsto(0,4),
\]
whereas
\[
-1\longmapsto(4,4),
\qquad
-81\longmapsto(4,4).
\]
Therefore the numerator in \eqref{eq:cq} maps to
\[
\bigl(3\cdot4^q,\,3\cdot4^q\bigr)
=
\bigl(3(-1)^q,\,3(-1)^q\bigr),
\]
which is nonzero in both components. Since \(4^{q+1}\) is also a unit
at both primes, we conclude that
\begin{equation}\label{eq:cq-units}
v_\pi(c_q)=v_{\bar\pi}(c_q)=0
\qquad
(q\ge0).
\end{equation}

Also,
\[
v_\pi(\beta)=v_{\bar\pi}(\beta)=1,
\]
because \(1+\ii\) is divisible by neither \(\pi\) nor \(\bar\pi\).

The constant $16$ and every $c_q$ are units at both primes above $5$,
and
\[
v_\pi(\beta)=v_{\bar\pi}(\beta)=1.
\]
Moreover, for $r\in\{0,1,2,3\}$ and $s\ge1$, Legendre's formula gives
\[
v_5\!\left(\frac{(r+4s)!}{r!}\right)\le s<4s.
\]
Lemma~\ref{lem:unique-minimal-coefficient}, applied at both primes
above $5$, therefore yields
\[
v_5(\G_n(5))
=
r+v_5\binom nr
\qquad(n=4Q+r).
\]
Since $r!$ is a $5$-adic unit for $0\le r\le3$,
\[
v_5\binom nr
=
\sum_{j=0}^{r-1}v_5(n-j),
\]
which gives the four displayed formulas.
\end{proof}

\section{Data Availability Statement} 
The numerical values reported in this article were obtained by direct
exact computation in $\ZZ[\ii]$. No external datasets were used.
Code used for the computations is available from the authors upon
reasonable request.


\end{document}